\documentclass[12pt,a4paper]{amsart}
\usepackage[english]{babel}
\usepackage{amsmath}
\usepackage{amsfonts}
\usepackage{amssymb}
\usepackage{amsthm}
\usepackage{graphicx}
\usepackage{enumitem}
\usepackage{stmaryrd}
\usepackage{hyperref}
\usepackage{bbm}
\usepackage{xcolor,mathtools}
\usepackage{mathrsfs}
\usepackage{esint}

\theoremstyle{definition}
\newtheorem{defi}{Definition}[section]
\newtheorem{definition}[defi]{Definition}
\newtheorem{ex}[defi]{Example}

\newtheorem{rem}[defi]{Remark}
\theoremstyle{plain}
\newtheorem{teo}[defi]{Theorem}
\newtheorem{theorem}[defi]{Theorem}

\newtheorem{lemma}[defi]{Lemma}
\newtheorem{pro}[defi]{Proposition}

\newtheorem*{teorema}{Theorem}
\usepackage[left=2cm,right=2cm,top=2cm,bottom=2cm]{geometry}
\newcommand{\N}{\mathbb{N}}
\newcommand{\he}{\mathbb{H}}
\newcommand{\hel}{\mathfrak{h}}

\newcommand{\R}{\mathbb{R}}
\newcommand{\s}{\mathcal{S}}

\newcommand{\pp}{\partial}

\newcommand{\V}{\mathbb{V}}
\newcommand{\W}{\mathbb{W}}

\newcommand{\ve}{\varepsilon}

\newcommand{\se}{\subseteq}

\newcommand{\ceq}{\coloneqq}

\renewcommand{\H}{\mathbb{H}}
\newcommand{\res} {\mathop{\hbox{\vrule height 7pt width .5pt depth 0pt \vrule height .5pt width 6pt depth 0pt}}\nolimits}

\newcommand{\G}{\mathbb{G}}

\newcommand{\leb}{\mathcal{L}}
\newcommand{\gr}{\operatorname{gr}}
\newcommand{\Lip}{\operatorname{Lip}}
\newcommand{\lip}{\operatorname{lip}}
\newcommand{\T}{\mathbb{T}}
\newcommand{\diam}{\operatorname{diam}}
\newcommand{\h}{\mathbb{H}}

\title{Stepanov Differentiability Theorem for intrinsic graphs in Heisenberg groups}

\author{Marco Di Marco}
\address{Dipartimento di Matematica ``T. Levi-Civita'', Università di Padova, via Trieste 63, 35121 Padova, Italy.}
\email{marco.dimarco@phd.unipd.it}

\author{Andrea Pinamonti}
\address{Dipartimento di Matematica, Università di Trento, via Sommarive, 14, 38123 Povo (TN), Italy}
\email{andrea.pinamonti@unitn.it}

\author{Davide Vittone}
\address{Dipartimento di Matematica ``T. Levi-Civita'', Università di Padova, via Trieste 63, 35121 Padova, Italy.}
\email{davide.vittone@unipd.it}

\author{Kilian Zambanini}
\address{Dipartimento di Matematica, Università di Trento, via Sommarive, 14, 38123 Povo (TN), Italy}
\email{kilian.zambanini@unitn.it}

\begin{document}

\begin{abstract}
We prove a Stepanov differentiability type theorem for intrinsic  graphs in sub-Riemannian Heisenberg groups.
\end{abstract}

\maketitle
\section{Introduction}

The classical Rademacher theorem states that: if $\Omega\subset\mathbb{R}^n$ is an open set and $f:\Omega\to \mathbb{R}^m$ is Lipschitz continuous, then $f$ is differentiable almost everywhere in $\Omega$. One important generalization of  Rademacher's theorem is the following result due to Stepanov (see e.g. \cite[3.1.8]{federer1969} and \cite{stepanov1923}) 
\begin{teorema}\label{teo_classStep}
Let $\Omega\se \R^n$ be an open set and let $f:\Omega \to \R^m$. Consider the set
\begin{equation}\label{def_sf}
S_f \ceq \left\lbrace a \in \Omega: \limsup_{x \to a}\frac{|f(x)-f(a)|}{|x-a|}<+\infty \right\rbrace.
\end{equation}
Then $f$ is differentiable almost everywhere on $S_f$.
\end{teorema}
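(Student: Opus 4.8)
The plan is to reduce the statement to the classical Rademacher theorem by decomposing $S_f$ into countably many pieces on each of which $f$ is the restriction of a globally Lipschitz map. First I would introduce, for every open ball $B\se\Omega$ with center in $\mathbb{Q}^n$ and rational radius and every $k\in\N$, the set
\begin{equation*}
E_{B,k}\ceq\left\{a\in B:\ |f(x)-f(a)|\le k\,|x-a|\ \text{ for all }x\in B\right\}.
\end{equation*}
The family of pairs $(B,k)$ is countable, and I claim $S_f=\bigcup_{(B,k)}E_{B,k}$. The inclusion $\supseteq$ is immediate, since on $E_{B,k}$ the incremental ratio is bounded by $k$; for $\se$ one uses that $a\in S_f$ provides some $k$ and some radius $\rho$ with $|f(x)-f(a)|\le k|x-a|$ on $B(a,\rho)\cap\Omega$, and then any rational ball $B$ with $a\in B\se B(a,\rho)\cap\Omega$ witnesses $a\in E_{B,k}$.

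Next I would fix a pair $(B,k)$. Taking $x=a'\in B$ in the defining inequality shows that $f|_{E_{B,k}}$ is $k$-Lipschitz, so a componentwise McShane (or Kirszbraun) extension yields a Lipschitz map $g\ceq g_{B,k}\colon\R^n\to\R^m$ that agrees with $f$ on $E_{B,k}$. By Rademacher's theorem $g$ is differentiable $\leb^n$-a.e., and by the Lebesgue density theorem $\leb^n$-a.e.\ point of $E_{B,k}$ is a density point of $E_{B,k}$. Since $S_f$ is the countable union of the sets $E_{B,k}$, it suffices to prove that $f$ is differentiable at every point $a\in E_{B,k}$ that is \emph{simultaneously} a density point of $E_{B,k}$ and a differentiability point of $g$; the union over $(B,k)$ of the remaining (null) exceptional sets is then $\leb^n$-negligible, yielding the conclusion.

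To establish differentiability of $f$ at such a point $a$, with $df(a)\ceq dg(a)$, I would estimate, for $x\in B$ close to $a$,
\begin{equation*}
f(x)-f(a)-dg(a)(x-a)=\big(f(x)-f(y)\big)+\big(g(y)-g(a)-dg(a)(y-a)\big)+dg(a)(y-x),
\end{equation*}
where $y\in E_{B,k}$ is chosen nearest to $x$. Density of $E_{B,k}$ at $a$ forces $\operatorname{dist}(x,E_{B,k})=o(|x-a|)$, hence $|x-y|=o(|x-a|)$; the middle term is $o(|x-a|)$ because $f=g$ on $E_{B,k}$ and $g$ is differentiable at $a$, and the last term is $O(|x-y|)=o(|x-a|)$. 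The step I expect to be the main obstacle is the control of the first term: here I would exploit that $y\in E_{B,k}$ bounds the incremental ratio of $f$ on the \emph{whole} ball $B$, so that $|f(x)-f(y)|\le k|x-y|=o(|x-a|)$ for $x\in B$ as well. Summing the three estimates gives $f(x)-f(a)-dg(a)(x-a)=o(|x-a|)$, i.e.\ differentiability of $f$ at $a$. The delicate point to verify carefully is the uniform passage from the density property to the bound $\operatorname{dist}(x,E_{B,k})=o(|x-a|)$, which must hold as $x\to a$ along \emph{every} direction and not merely along $E_{B,k}$; this is exactly what lets membership in a single piece propagate differentiability of the Lipschitz surrogate $g$ back to $f$.
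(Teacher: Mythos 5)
Your proposal is correct and takes essentially the same route as the paper: this is exactly the classical Federer-style argument that the paper cites for Theorem \ref{teo_classStep} and then adapts, in the intrinsic setting, in its proof of Theorem \ref{maindiff} (countable decomposition of $S_f$ into pieces on which $f$ is Lipschitz, Lipschitz extension, Rademacher's theorem, and Lebesgue density points to transfer differentiability of the extension back to $f$). In particular, you correctly isolate the crucial point that membership of the auxiliary point $y$ in $E_{B,k}$ bounds the increments of $f$ against \emph{all} points of $B$, which is what allows the comparison with the extension at nearby non-$E_{B,k}$ points.
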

Roughly speaking, the Stepanov differentiability theorem generalizes the classical notion of differentiability by relaxing the conditions under which a function is differentiable almost everywhere, broadening the scope of functions to include those that may not exhibit pointwise regularity. 

The classical proof of the Stepanov differentiability theorem can be found in many  textbooks, see e.g. \cite{federer1969} and it is essentially based on density theorems and the application of  Rademacher's theorem to a Lipschitz extension of $f_{|G_n}$, where $G_n$ are suitable measurable sets on which $f$ is Lipschitz. In 1999, J. Mal\'y \cite{maly1999} proposed, for \emph{real-valued} functions defined on separable Banach spaces, an alternative and elegant proof without using any density theorem. 
 In \cite{MR1651959,MR2414208}, using differentiability points of the distance functions instead of density theorems, the authors were able to prove a Stepanov  type theorem (in the Gâteux sense) for functions between infinite dimensional Banach spaces.
Later, in \cite{maly2015}, Mal\'y  and Zaj{\'{\i}}{\v{c}}ek, presented a new approach which shows how a Stepanov  type theorem (in the Frechet sense) can be inferred from the corresponding theorem of Rademacher type.  

\vspace{0.5cm}

In recent years, there has been significant and ongoing research aimed at extending classical analysis results from Euclidean spaces to more general metric-measure spaces (see e.g. \cite{MR3363168,MR1800917} and references therein). A major result is due to Cheeger \cite{MR1708448}, who found a deep generalization of  Rademacher's theorem in the context of doubling metric measure spaces that satisfy a Poincarè inequality. More recently, the approach in \cite{maly1999} has been used in \cite{brz2004} to prove a Stepanov  type theorem for real-valued maps defined on metric spaces endowed with a doubling Borel measure and later in \cite{wz2015} the result has been further generalized to maps between metric measure spaces and Banach spaces.
Building on the foundation established in \cite{MR1708448}, a substantial body of literature has emerged, see e.g. \cite{MR3363168, MR2291675} for a complete overview. Let us also mention the recent~\cite{dedonato2024stepanovtheoremqvaluedfunctions}.

The notion of Lipschitz submanifolds in sub-Riemannian geometry was introduced, at least in the setting of Carnot groups, by B. Franchi, R. Serapioni and F. Serra Cassano in \cite{MR1871966,fssc2006, fs2016} through the theory of intrinsic Lipschitz graphs. Roughly speaking, a subset $S\subset \mathbb{G}$ of a Carnot group $\mathbb{G}$ is intrinsic Lipschitz if at each point $P\in S$ there is an intrinsic cone with vertex $P$ and fixed opening intersecting $S$ only at $P$. Remarkably, this notion turned out to be the right one in the setting of the intrinsic rectifiability in the simplest Carnot group, namely the Heisenberg group $\mathbb{H}^n$. Indeed, it was proved in \cite{MR1871966,vittone2021} that the notion of rectifiable set in terms of an intrinsic regular hypersurfaces is equivalent to the one in terms of intrinsic Lipschitz graphs. Recently, the theory of intrinsic Lipschitz functions has played a crucial in the study of quantitative rectifiability \cite{MR4388340,MR3992573} and has even been applied to problems in information theory \cite{MR3815462,MR4460594}. See also \cite{MR3465805, sc2016} for further applications.

The main open question in this area of research is whether a Rademacher-type theorem holds for intrinsically Lipschitz functions between homogeneous subgroups of a Carnot group. Specifically, consider a splitting $\mathbb{G}=\mathbb{W}\mathbb{V}$
of a Carnot group $\mathbb{G}$
and let $\phi:\mathbb{W}\to \mathbb{V}$
 be an intrinsically Lipschitz function. The question is whether such a function is intrinsically differentiable almost everywhere (see Definition \ref{intdiff} below).
In \cite{fms2014,fs2016}, the authors provided a positive answer when $\mathbb{G}$ has step two or is a Carnot group of type $\star$ and $\mathbb{V}\equiv\mathbb{R}$. More recently, the third named author \cite{vittone2021} has proved that the answer is also affirmative in the case of the Heisenberg group, even without assuming any prior splitting condition. We also address the interested reader to \cite{MR4277829,CMP24, MR4329286} for further results. Remarkably, in \cite{MR4379561}, the authors constructed intrinsic Lipschitz graphs of codimension $2$ in Carnot groups which are not intrinsically differentiable almost everywhere thus discovering a deep connection between the notion of intrinsic differentiability and the geometry of the underlying Carnot group. 

In the present paper we prove an analogue of Stepanov differentiability theorem for intrinsic graphs in Heisenberg groups. More precisely, we prove (see Theorems \ref{stepanov1} and \ref{maindiff} below) that, given  complementary subgroups $\W,\V$ of $\H^n$, then every function $\phi:A \se \W \to \V$ is intrinsically differentiable almost everywhere on the set of points where its pointwise intrinsic Lipschitz constant is finite.
The proof of both results, although inspired by that of \cite{federer1969}, differs in some fundamental points. First of all, the notion of intrinsic Lipschitz continuity differs from the classical metric one (see Definition \ref{def_lipschitz}). This means that an intrinsic Lipschitz function $\phi:A\subseteq \W\to\V$ need not to be metric Lipschitz (with respect to the distances induced on $\W$ and $\V$), see \cite{fssc2010}. Therefore our result does not fit into the classical framework of Lipschitz maps between metric measure spaces. 
Secondly, the notions of differentiability and Lipschitz continuity are indeed intrinsic geometric properties of $\phi$, which take into account not only the structure of $\W,\V$ but also how they interact inside $\he^n$. 
To overcome these problems, we revisited the proof provided in \cite{federer1969} from a new geometric perspective; specifically, instead of working with the function $\phi$, we considered its intrinsic graph. Finally, in the last part of the paper, we provide an alternative proof of the theorem in the  case of codimension one, using the approach developed in \cite{maly1999}. We point out that the proof of our main results, i.e., Theorems \ref{stepanov1} and \ref{maindiff}, are not dependent on the particular structure of $\he^n$. They can be extended to intrinsic graphs in general Carnot groups $\G$ endowed with a splitting $\W\V$ for which a Rademacher theorem holds.

\vspace{1cm}

\textbf{Acknowledgments:} 
The authors would like to thank Francesco Serra Cassano and Raul Serapioni for many interesting discussions on the topic of the present paper.\\
The authors are members and acknowledge the support of the Istituto Nazionale di Alta Matematica (INdAM), Gruppo Nazionale per l'Analisi Matematica, la Probabilità e le loro Applicazioni (GNAMPA).\\
 M. Di Marco and  D. Vittone are supported by University of Padova. D. Vittone is also supported by PRIN 2022PJ9EFL project ``Geometric Measure Theory: Structure of Singular Measures, Regularity Theory and Applications in the Calculus of Variations'' funded by the European Union - Next Generation EU, Mission 4, component 2 - CUP:E53D23005860006.\\
 A. Pinamonti and K. Zambanini are supported by MIUR-PRIN 2022 Project \emph{Regularity problems in sub-Riemannian structures}  Project code: 2022F4F2LH.\\

\section{Notation and preliminary results}
\begin{defi}
For $n\geq 1$ we denote by $\he^n$ the $n$-th {\em Heisenberg group}, identified with $\R^{2n+1}$ through exponential coordinates. We denote a point $p\in \he^n$ as $p=(x,y,t)$ with $x,y \in \R^{n}$ and $t \in \R$. If $p=(x,y,t),\, q=(x',y',t') \in \he^n$, the group operation is defined as 
\[
p \cdot q \ceq (x+x',y+y',t+t'+\tfrac{1}{2}\langle x,y' \rangle_{\R^n}-\tfrac{1}{2}\langle x', y \rangle_{\R^n}).
\]
If $p=(x,y,t)\in\he^n$, its inverse is $p^{-1}=(-x,-y,-t)$ and $0=(0,0,0)\in \he^n$ is the identity of the group. 

For $\lambda >0$, we denote by $\delta_\lambda:\he^n\to\he^n$ the \textit{intrinsic dilations} of the Heisenberg group defined, for $p=(x,y,t)\in\he^n$, by $\delta_\lambda(x,y,t) \ceq (\lambda x,\lambda y, \lambda^2 t)$. Observe that dilations form a one-parameter family of group isomorphisms.
We say that a subgroup of $\he^n$ is \textit{homogeneous} if it is closed under intrinsic dilations.\\
The Heisenberg group $\he^n$ admits the structure of a Lie group of \textit{topological dimension} $2n+1$.
We denote by $Q \ceq 2n+2$ the {\em homogeneous dimension} of $\he^n$. The Lebesgue measure $\mathcal L^{2n+1}$ is the Haar measure on $\he^n\equiv\R^{2n+1}$ and it is $Q$-homogeneous with respect to dilations.
\end{defi}

\begin{defi}
We denote by $\hel^n$ (or by $\hel$ when the dimension $n$ is clear) the $(2n+1)$-dimensional Lie algebra of  left invariant vector fields in $\he^n$. The algebra $\hel$ is generated by the vector fields $X_1,...,X_n,Y_1,...,Y_n,T$, where (for $1 \leq j \leq n$)
\[
X_j \ceq \pp_{x_j}-\frac{y_j}{2}\pp_t,
\qquad
Y_j \ceq \pp_{y_j}+\frac{x_j}{2}\pp_t,
\qquad
T \ceq \pp_t.
\]
We denote by $\hel_1$ the horizontal subspace of $\hel$, i.e.,
\[
\hel_1 \ceq \operatorname{span}(X_1,...,X_n,Y_1,...,Y_n),
\]
and by $\hel_2$ the linear span of $T$. Since $[X_j,Y_j]=T$, the Lie algebra $\hel$ admits the 2-step stratification $\hel=\hel_1 \oplus \hel_2$. 
Note also that, since $\he^n$ is simply connected and nilpotent, the exponential map exp:$\,\hel\to\H^n$ is a global diffeomorphism.

\end{defi}
\begin{defi} 
We say that a distance function $d:\H^n \times \H^n \to [0,+\infty)$ is a \emph{left invariant and homogeneous distance} if
\begin{enumerate}[label=(\roman*)]
\item $d(p,q)=d(r \cdot p,r \cdot q)$ for all $p,q,r \in \H^n$,
\item $d(\delta_\lambda(p),\delta_\lambda(q))=\lambda d(p,q)$ for all $p,q \in \H^n$ and $\lambda>0$.
\end{enumerate}
We define the  norm $\|\cdot \|$ associated to $d$ as $\|p\|\ceq d(0,p)$ for every $p \in \H^n$. Moreover, if for every $(x,y,t),(x',y',t) \in \H^n$ such that $|(x,y)|_{\R^{2n}}=|(x',y')|_{\R^{2n}}$ we have $\|(x,y,t)\|=\|(x',y',t)\|$ we say that $d$ is a \emph{rotationally invariant} distance.

\end{defi}

\begin{ex}\label{ex_dist}
There are numerous examples of left invariant, homogeneous and rotationally invariant distances on $\H^n$, the most noteworthy being the following.
\begin{enumerate}[label=(\roman*)]
\item The \emph{Carnot-Carathéodory} distance $d_{cc}$ defined for $p \in \H^n$ as
\[
d_{cc}(0,p)\ceq \inf\left\{\|h\|_{L^1([0,1],\R^{2n})} : 
\begin{array}{l}
\text{the curve $\gamma_h:[0,1]\to\H^n$ defined by}\\
\gamma_h(0)=0,\ \dot\gamma_h=\sum_{j=1}^n(h_jX_j+h_{j+n}Y_j)(\gamma_h)\\
\text{has final point }\gamma_h(1)=p
\end{array}
\right\}.
\]
\item The \emph{infinity} distance $d_\infty$ defined for $(x,y,t) \in \H^n$ as
\[
d_\infty(0,(x,y,t)) \ceq \max \lbrace |(x,y)|_{\R^{2n}},2|t|_\R^{\frac{1}{2}}\rbrace.
\]
\item The \emph{Korányi (or Cygan-Korányi)} distance $d_K$ defined for $(x,y,t) \in \H^n$ as
\[
d_K(0,(x,y,t)) \ceq \left((|x|_{\R^n}^2+|y|_{\R^n}^2)^2+16t^2 \right)^\frac{ 1}{4}.
\]
\end{enumerate}
\end{ex}

In view of the following proposition, we will denote by $d$ a fixed left invariant, homogeneous and rotationally invariant distance on $\H^n$, by $\|\cdot\|$ its associated norm and for any $p \in \H^n,r>0$ we will denote by $B(p,r)$ the corresponding open balls.

\begin{pro}[{\cite[Proposition 1.3.15]{didonatophd}}]\label{pro_distequiv}
Let $d_1$ and $d_2$ be left invariant and homogeneous distances on $\H^n$. Then they are bi-Lipschitz equivalent, i.e., there exists $C>0$ such that for all $p,q \in \H^n$ 
\[
\frac{1}{C}d_2(p,q)\leq d_1(p,q) \leq C d_2(p,q).
\]
In particular every left invariant and homogeneous distance induces the Euclidean topology on $\H^n$.
\end{pro}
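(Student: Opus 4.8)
The plan is to reduce everything to a comparison between the norm $\|\cdot\|$ associated to an arbitrary left invariant, homogeneous distance $d$ and the concrete infinity norm $\|p\|_\infty \ceq \max\{|(x,y)|_{\R^{2n}}, 2|t|_\R^{1/2}\}$ from Example~\ref{ex_dist}. By left invariance one has $d(p,q) = d(0, p^{-1}\cdot q) = \|p^{-1}\cdot q\|$, so it suffices to compare homogeneous norms; and by transitivity it is enough to prove that every homogeneous norm $\|\cdot\|$ satisfies $c\,\|p\|_\infty \le \|p\| \le C\,\|p\|_\infty$ for suitable $0 < c \le C$ and all $p \in \H^n$. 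I first record two elementary consequences of the axioms: from left invariance and symmetry of $d$ one gets $\|p^{-1}\| = d(0,p^{-1}) = d(p,0) = \|p\|$, and from the triangle inequality one gets subadditivity $\|p\cdot q\| = d(0,p\cdot q) \le d(0,p) + d(p,p\cdot q) = \|p\| + \|q\|$.

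The first, and main, step is the upper bound $\|p\| \le C\,\|p\|_\infty$, which I would establish by a direct geometric estimate decomposing $p = (x,y,t)$ into horizontal and vertical displacements. Writing $(x,y,t) = (x,y,s)\cdot(0,0,t-s)$ — legitimate because the symplectic cross terms in the group law vanish when the second factor has no horizontal component — I would reach $(x,y,s)$ as an ordered product of the $2n$ one-parameter moves along the directions $X_j,Y_j$, each such factor being a dilation of a fixed unit point, so that subadditivity and homogeneity give $\|(x,y,s)\| \le C\sum_j(|x_j|+|y_j|) \le C|(x,y)|_{\R^{2n}}$, the resulting vertical coordinate satisfying $|s| \le C|(x,y)|_{\R^{2n}}^2$. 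For the vertical factor, homogeneity yields $\|(0,0,\tau)\| = |\tau|^{1/2}\,\|(0,0,\pm1)\|$ since $(0,0,\tau) = \delta_{|\tau|^{1/2}}(0,0,\pm1)$; combined with $|t-s|^{1/2} \le |t|^{1/2} + C|(x,y)|_{\R^{2n}}$ this gives $\|p\| \le C(|(x,y)|_{\R^{2n}} + |t|^{1/2}) \le C'\|p\|_\infty$. This is where the specific structure of $\H^n$ enters, and I expect it to be the only genuinely computational part.

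From the upper bound, continuity of $\|\cdot\|$ in the Euclidean topology follows at once: the reverse triangle inequality $|\,\|p\|-\|q\|\,| \le \|p^{-1}\cdot q\|$, together with $\|p^{-1}\cdot q\| \le C\|p^{-1}\cdot q\|_\infty \to 0$ as $q\to p$ (the group operation and $\|\cdot\|_\infty$ being Euclidean continuous), shows $\|\cdot\|$ is continuous. The lower bound then comes for free by compactness and scaling: the set $\Sigma \ceq \{p : \|p\|_\infty = 1\}$ is Euclidean compact and does not contain $0$, so the continuous positive function $\|\cdot\|$ attains on $\Sigma$ a minimum $c>0$; since every $p\neq 0$ is the $\|p\|_\infty$-dilation of a point of $\Sigma$, homogeneity of both norms gives $\|p\| \ge c\,\|p\|_\infty$ for all $p$ (and symmetrically $\|p\| \le C\,\|p\|_\infty$). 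Hence every homogeneous norm is equivalent to $\|\cdot\|_\infty$, which proves the bi-Lipschitz equivalence of $d_1$ and $d_2$ by transitivity; and since $\|\cdot\|_\infty$ manifestly induces the Euclidean topology, so does every left invariant homogeneous distance. The main obstacle, as noted, is the explicit upper estimate of the second paragraph; everything else is soft.
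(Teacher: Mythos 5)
Your proposal is correct, and since the paper offers no proof of its own here (the proposition is quoted from Di Donato's thesis), your argument --- reduction to homogeneous norms, a direct upper bound $\|p\|\le C\|p\|_\infty$ via the horizontal/vertical decomposition $(x,y,t)=(x,y,s)\cdot(0,0,t-s)$ together with subadditivity and homogeneity, then Euclidean continuity of $\|\cdot\|$, then the lower bound by compactness of the $\|\cdot\|_\infty$-unit sphere --- is precisely the standard argument found in that reference. In particular you handle correctly the one genuine subtlety: continuity of an abstract homogeneous norm in the Euclidean topology cannot be assumed a priori, so the computational upper bound must be established first, and only then can the compactness/scaling argument be invoked for the lower bound.
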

We say that a homogeneous subgroup $\V\subset\he^n$ is \textit{horizontal} if it is contained in the \textit{horizontal fiber} i.e. $\V\subseteq \text{exp}(\hel_1)$. Horizontal subgroups can be identified with $(\R^k,|\cdot|)$: more precisely, if $V_1,\dots,V_k\in \hel_1$ are such that $\V=\exp(\text{span}(V_1,\dots,V_k))$, then the map $\R^k\ni x\mapsto\exp(x_1V_1+\dots+x_kV_k)$ is a biLipschitz diffeomorphism between $(\R^k,|\cdot|)$ and $(\V,d)$. In particular the Hausdorff dimension of $\V$ equals the topological dimension $k$.\\
On the other hand, we will say that a subgroup $\W$ is \textit{vertical} if it contains the \textit{center} of the group i.e.
exp$(\hel_2)\subseteq \W$. In this case the Hausdorff dimension of $\W$ is greater than the topological one: for instance the metric dimension of $\he^n$ coincides with the homogeneous dimension $Q=2n+2$.\\ All homogeneous subgroups of the Heisenberg group are either horizontal (and in this case they are \textit{abelian}) or vertical (and in this case they are \textit{normal}).
\begin{defi}
Let $\W,\V$ be homogeneous subgroups of $\H^n$. We say that $\W,\V$ are \textit{complementary subgroups} in $\H^n$ if $\W\cap \V=\{ 0 \}$ and $\W\cdot\V=\H^n$. 
\end{defi}
All possible couples of complementary subgroups in $\he ^n$ are formed by an (abelian) horizontal subgroup $\V$ of dimension $k$, for $1\leq k\leq n$, and by a (normal) vertical subgroup $\W$ of dimension $2n+1-k$. 
\begin{rem}\label{rem_componenti}
If $\W$ and $\V$ are complementary subgroups in $\H^n$, then each element $p\in\H^n$ can be written in a unique way as $p=w\cdot v$, for $w\in\W$, $v\in\V$. The elements $w, v$ are called the \textit{components} (or the \textit{projections}) of $p$ with respect to the decomposition $\H^n=\W\cdot\V$ and we will use the notation $w=p_\W,\,v=p_\V$. Let's stress that the components of a point $p\in\H^n$ depend on both the complementary subgroups and also on the order in which they are taken.\end{rem}

Some properties of the projections are described in \cite[Subsection 2.2]{fs2016}. 
For convenience we collect below the ones that we will use in the paper:
\begin{pro}\label{prop_proj}
    Let $\he^n=\W\V$ where $\W$ and $\V$ are complementary subgroups. Let us denote by $P_\W$ and $P_\V$ the \textit{projection maps} onto $\W$ and $\V$ respectively, namely
    \begin{align*} 
    &P_\W:\he^n\to\W &P_\V:\he^n\to\V \\
    &\,P_\W(p)\ceq p_\W &P_\V(p)\ceq p_\V.\,\,
    \end{align*}
    Then\begin{enumerate}
    \item If $\W$ is a normal subgroup, then $P_\V$ is a Lipschitz homomorphism of groups;\\
    Similarly, if $\V$ is normal, then $P_\W$ is a Lipschitz homomorphism of groups.
    \item If $\W$ is a normal subgroup, then the following identities hold:
    \begin{align*}
    &P_\W(p\cdot q)=P_\W(p)\cdot P_\V(p)\cdot P_\W(q)\cdot P_\V(p)^{-1} & P_\V(p\cdot q)=P_\V(p)\cdot P_\V(q)\\
    & P_\W(p^{-1})= P_\V(p)^{-1}\cdot P_\W(p)^{-1}\cdot P_\V(p)
    &P_\V(p^{-1})=P_\V(p)^{-1}.\qquad
    \end{align*}
        \item There exists a constant $\widetilde C=\widetilde C(\W,\V)>0$ such that
        \[\widetilde C(\|P_\W(p)\|+\|P_\V(p)\|)\leq \|p\|\leq \|P_\W(p)\|+\|P_\V(p)\|\quad\text{ for all }p\in\H^n.\]
    \end{enumerate}
\end{pro}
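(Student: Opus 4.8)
The plan is to handle the three assertions in the order (2), (1), (3), since the algebraic identities of (2) immediately supply the homomorphism property needed in (1), while the remaining analytic content of (1) and (3) both reduce to a single homogeneity--compactness estimate.

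First I would prove (2) purely algebraically, using the normality of $\W$ together with the uniqueness of the decomposition $p=P_\W(p)\cdot P_\V(p)$ recorded in Remark~\ref{rem_componenti}. Writing $p=P_\W(p)P_\V(p)$ and $q=P_\W(q)P_\V(q)$, I would compute $p\cdot q=P_\W(p)\,P_\V(p)\,P_\W(q)\,P_\V(q)$ and observe that, since $\W$ is normal, the conjugate $P_\V(p)\,P_\W(q)\,P_\V(p)^{-1}$ lies in $\W$. Hence $p\cdot q=\bigl(P_\W(p)\,P_\V(p)\,P_\W(q)\,P_\V(p)^{-1}\bigr)\cdot\bigl(P_\V(p)\,P_\V(q)\bigr)$ with the first factor in $\W$ and the second in $\V$; uniqueness of the splitting then yields both product formulas simultaneously. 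The two inverse formulas follow in the same manner, rewriting $p^{-1}=P_\V(p)^{-1}P_\W(p)^{-1}=\bigl(P_\V(p)^{-1}P_\W(p)^{-1}P_\V(p)\bigr)\cdot P_\V(p)^{-1}$ and again invoking normality and uniqueness.

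The homomorphism half of (1) is then free: the identity $P_\V(p\cdot q)=P_\V(p)\,P_\V(q)$ is precisely the assertion that $P_\V$ is a group homomorphism (and, by the symmetric argument when $\V$ is normal, so is $P_\W$). For the Lipschitz bound I would first record two elementary properties of the projections, valid for any complementary pair: they are continuous, because the splitting map $\W\times\V\to\he^n$, $(w,v)\mapsto w\cdot v$, is a diffeomorphism (the group law is polynomial and the subgroups are complementary), so its inverse $p\mapsto(P_\W(p),P_\V(p))$ is continuous; and they commute with dilations, $P_\V(\delta_\lambda p)=\delta_\lambda P_\V(p)$ and similarly for $P_\W$, since $\delta_\lambda$ is an automorphism preserving $\W$ and $\V$ and hence the splitting. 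By Proposition~\ref{pro_distequiv} the unit sphere $\{\|p\|=1\}$ is compact, so $L\ceq\max_{\|p\|=1}\|P_\V(p)\|<+\infty$; applying this bound to $\delta_{1/\|p\|}p$ and using homogeneity upgrades it to $\|P_\V(p)\|\le L\|p\|$ for all $p$. Finally, left invariance of $d$ and the homomorphism property give $d(P_\V(p),P_\V(q))=\|P_\V(p)^{-1}P_\V(q)\|=\|P_\V(p^{-1}q)\|\le L\|p^{-1}q\|=L\,d(p,q)$, which is the claimed Lipschitz estimate.

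For (3) the upper bound is just subadditivity of the norm (triangle inequality plus left invariance): $\|p\|=\|P_\W(p)P_\V(p)\|\le\|P_\W(p)\|+\|P_\W(p)^{-1}P_\W(p)P_\V(p)\|=\|P_\W(p)\|+\|P_\V(p)\|$. The lower bound follows from the growth estimates of the previous paragraph, applied to both $P_\W$ and $P_\V$; here I stress that these rely only on continuity and homogeneity, so no normality is needed. This gives $\|P_\W(p)\|+\|P_\V(p)\|\le(L_\W+L_\V)\|p\|$ and hence the claim with $\widetilde C\ceq(L_\W+L_\V)^{-1}$. I expect the only non-routine step to be this growth/Lipschitz estimate, where the homogeneity--compactness argument replaces any explicit computation; the identities of (2) and the triangle-inequality bound are mechanical once the normality trick and the uniqueness of the splitting are in place.
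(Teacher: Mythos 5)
Your proof is correct, but note that there is nothing in the paper to compare it against: the proposition is stated as a collection of known facts imported from \cite[Subsection 2.2]{fs2016}, with no internal proof. Your argument is essentially the standard one from that reference. The identities in (2) follow, exactly as you say, from normality of $\W$ plus uniqueness of the decomposition $p=p_\W\cdot p_\V$ (insert $P_\V(p)P_\W(q)P_\V(p)^{-1}\in\W$ and read off the components); the homomorphism half of (1) is then just the identity $P_\V(p\cdot q)=P_\V(p)\cdot P_\V(q)$; and the metric content of (1) and (3) is a homogeneity--compactness argument. In \cite{fs2016} this last step is organized slightly differently --- one bounds $\|w\cdot v\|$ from below on the compact set $\{(w,v):\|w\|+\|v\|=1\}$ and scales by dilations --- whereas you maximize $\|P_\V(p)\|$ and $\|P_\W(p)\|$ on $\{\|p\|=1\}$; the two are interchangeable. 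The one step you should not wave through is the continuity of the projections: calling the splitting map $\W\times\V\ni(w,v)\mapsto w\cdot v$ a diffeomorphism because ``the group law is polynomial'' is not by itself a proof that the inverse is continuous (a polynomial bijection need not have a continuous inverse in general). Either invoke invariance of domain for this continuous bijection between spaces of equal topological dimension, or write down the explicit (polynomial) coordinate expressions for $P_\W$ and $P_\V$, which is how one usually sees that the projections are in fact real-analytic. With that point patched, the proof is complete and needs no normality for (3) or for the dilation-equivariance, as you correctly stress.
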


\begin{definition}
Let $\W,\V$ be complementary subgroups of $\H^n$. Given a function $\phi:A\subset\W\to\V$, we define its \textit{intrinsic graph} as the set
    \[\gr_\phi\ceq \{w\cdot\phi(w): w\in A\}.\]
\end{definition}
\begin{definition}\label{def:cone}
   Let $\W, \V$ be complementary subgroups of $\H^n$. If $\beta\geq 0$, we define the \textit{(intrinsic) cone} $C_{\W,\V}(0,\beta)$ of vertex $0$, base $\W$, axis $\V$ and opening $\beta$ as
   \[C_{\W,\V}(0,\beta)\ceq \{p\in\H^n:\|p_\W\|\leq \beta\|p_\V\|\}.\]
Moreover, for every $p\in\H^n \setminus \lbrace 0 \rbrace$ we define the \textit{(intrinsic) cone} of vertex $p$, base $\W$, axis $\V$ and opening $\beta$ as
   \begin{equation}\label{tracone}
   C_{\W,\V}(p,\beta):=p\cdot C_{\W,\V}(0,\beta).
   \end{equation}
For the sake of brevity, when there is no risk of confusion, in the following we will denote such cone as $C_\beta(p)$.
\end{definition}
\begin{rem}
Let $\W,\V$ be complementary subgroups of $\H^n$. For every $p \in \H^n$, $0<\alpha<\beta$ we have $C_\alpha(p)\subset C_{\beta}(p)$. Moreover $\delta_\lambda \big(C_\beta(0)\big)=  C_\beta(0)$ for every $\lambda >0$. Finally $C_0(0)=\V$, while $\overline{\cup_{\beta>0}C_\beta(0)}=\H^n$.
\end{rem}
\begin{definition}\label{def_lipschitz}
    Let $\W,\V$ be complementary subgroups of $\H^n$ and let $\phi:A\se \W\to \V$. We say that $\phi$ is an \textit{intrinsic Lipschitz map} if there exists $M>0$ such that, for all $p\in \gr_\phi$,
    \[C_{\frac{1}{M}}(p)\cap \gr_\phi=\{p\}.\]
    If this is the case, we will say that $\phi$ is intrinsic $M-$Lipschitz. Moreover, for $E\se A$, we define the \textit{Lipschitz constant} of $\phi$ on $E$ as
\[\Lip(\phi,E) \ceq \inf \left\{ M>0 :\,C_{\frac{1}{M}}(p) \cap \gr_{\phi|_E} =\lbrace p \rbrace\text{ for all $p\in \gr_{\phi|_E}$ }  \right\rbrace.\]
\end{definition}
\begin{definition}\label{pointLip}
Let $\W,\V$ be complementary subgroups of $\H^n$ and let $\phi:A\se \W\to \V$. We define the set
\[S_\phi:=\{w\in A: \exists \,\beta> 0 \text{ and } U \text{ open in $A$ with } w\in U, \,C_\beta(w \cdot \phi(w))\cap \,\gr_{\phi|_U}=\{w \cdot \phi(w)\}\}.\]
If $w \in S_\phi$ we say that $\phi$ is \emph{locally intrinsic Lipschitz} at $w$ and we define the \textit{pointwise intrinsic Lipschitz constant} of $\phi$ at $w$ as
    \begin{align*}
  & \lip(\phi,w) \ceq \inf  \left\lbrace M>0:\exists\, r>0 \text{ such that } C_\frac{1}{M}(w \cdot \phi(w)) \cap \gr_{\phi|_{B_\W(w,r)}} =\lbrace w \cdot \phi(w) \rbrace \right\rbrace.
      \end{align*}
      where with $B_\W( w, r)$ we intend, here and in the following, the ball depending on the distance induced on $\W$ by $d$.
\end{definition}

\begin{rem}
   By definition, intrinsic cones depend on the chosen distance $d$. However, the intrinsic Lipschitz continuity property of a given map $\phi$ does not depend on the choice of $d$; also the set $S_\phi$ does not depend on $d$. 
\end{rem}
\begin{defi}
Let $m \geq 0$. Given any left invariant, homogeneous and rotationally invariant distance on $\H^n$ we denote by $\s^m$ the \emph{spherical Hausdorff measure} on $\he^n$ defined for $E\subset \he^n$ by
\[
\s^m(E) \ceq \lim_{r\to0^+} \inf\left\{ \sum_{i\in \N} (2r_i)^m :\exists \;(p_i)_{i}\subset\he^n,\exists\;(r_i)_{i}\text{ with }0<r_i<r\text{ and }E\subset\bigcup_{i\in \N}B(p_i,r_i)  \right\}.
\]
\end{defi}

\begin{defi}
Let $\mu$ be a measure on $\H^n$, $E \se \H^n$ a measurable set for $\mu$ and $p \in E$. We say that $p$ is a \emph{point of density} of $E$ with respect to $\mu$ if
\[
\lim_{r \to 0^+}\frac{\mu(E \cap B(p,r))}{\mu(B(p,r))}=1.
\]

\end{defi}

\begin{teo}[{\cite[Theorem 3.9]{fs2016}}]\label{teo_grahlreg}
    Let  $\W,\V$ be complementary subgroups of $\H^n$ and let $\phi:\W \to \V$ be an intrinsic Lipschitz map. Denote by $k$ the (metric) dimension of $V$. Then $\s^{Q-k} \res \gr_\phi$ is $(Q-k)$-Ahlfors regular on $\gr_{\phi}$.
\end{teo}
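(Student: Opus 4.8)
The plan is to compare $\s^{Q-k}\res\gr_\phi$ with the push-forward, under the graph map, of the Haar measure of $\W$, treating both the lower and the upper Ahlfors bound through the cone condition. I set $s\ceq Q-k$ and fix any $M>\Lip(\phi,\W)$, so that $C_{1/M}(p)\cap\gr_\phi=\{p\}$ for every $p\in\gr_\phi$. Let $\mu$ be the Haar measure of $\W$, i.e. the Lebesgue measure in exponential coordinates; since $\W$ is a homogeneous subgroup of homogeneous dimension $Q-k$, $\mu$ is left invariant and $(Q-k)$-homogeneous, whence $\mu(B_\W(w,r))=\omega\,r^{s}$ for a fixed $\omega>0$ and all $w\in\W$, $r>0$. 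Finally I introduce the graph map $\Phi\colon\W\to\gr_\phi$, $\Phi(w)\ceq w\cdot\phi(w)$ (a bijection with inverse $P_\W|_{\gr_\phi}$), and set $\nu\ceq\Phi_\#\mu$, a measure supported on $\gr_\phi$.

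First I would record a two-sided comparison between $d$ and $\|P_\W(\cdot)\|$ along the graph. For $p,q\in\gr_\phi$ with $q\neq p$, the identity $C_{1/M}(p)=p\cdot C_{1/M}(0)$ shows that $q\notin C_{1/M}(p)$ means $\|P_\V(p^{-1}q)\|<M\,\|P_\W(p^{-1}q)\|$; inserting this into Proposition~\ref{prop_proj}(3) gives
\begin{equation*}
\widetilde C\,\|P_\W(p^{-1}q)\|\le\|p^{-1}q\|\le(1+M)\,\|P_\W(p^{-1}q)\|.
\end{equation*}
Fixing $p_0=\Phi(w_0)\in\gr_\phi$, these inequalities let me trap the preimage $\Phi^{-1}(\gr_\phi\cap B(p_0,r))$ between the two sets $\{w\in\W:\|P_\W(p_0^{-1}\Phi(w))\|<\rho\}$ with $\rho=r/(1+M)$ and $\rho=r/\widetilde C$. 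Since $\W$ is normal, $P_\V$ is a homomorphism (Proposition~\ref{prop_proj}(1)--(2)), and a direct computation gives $P_\W(p_0^{-1}\Phi(w))=\phi(w_0)^{-1}(w_0^{-1}w)\,\phi(w_0)$, the conjugate of $w_0^{-1}w\in\W$ by $\phi(w_0)\in\V$. Conjugation by a fixed $\V$-element is an inner automorphism of the nilpotent group that carries $\W$ onto itself; being unipotent it has unit Jacobian and therefore preserves $\mu$, so together with left invariance I get $\mu(\{w:\|P_\W(p_0^{-1}\Phi(w))\|<\rho\})=\mu(B_\W(0,\rho))=\omega\rho^{s}$. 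Hence
\begin{equation*}
\omega\,(1+M)^{-s}\,r^{s}\le\nu(B(p_0,r))\le\omega\,\widetilde C^{-s}\,r^{s}\qquad\text{for all }p_0\in\gr_\phi,\ r>0,
\end{equation*}
i.e. $\nu$ is $(Q-k)$-Ahlfors regular with constants independent of the base point.

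Finally I would transfer the regularity from $\nu$ to $\s^{s}\res\gr_\phi$. Because $\nu$ has uniformly bounded upper and lower $s$-densities along $\gr_\phi$, the standard comparison between a measure and the spherical Hausdorff measure yields $\nu\simeq\s^{s}\res\gr_\phi$: recentring an arbitrary cover at points of the graph and using the upper bound gives $\nu\res\gr_\phi\le\omega\widetilde C^{-s}\,\s^{s}\res\gr_\phi$, while a $5r$-covering argument (valid in any metric space, hence for the homogeneous distance $d$) together with the lower bound gives $\s^{s}\res\gr_\phi\le C'\,\nu$. Combining these with the regularity of $\nu$ produces $c\,r^{s}\le\s^{s}(\gr_\phi\cap B(p,r))\le C\,r^{s}$, which is the claim. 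I expect the crux to be the second step — showing that the $\Phi$-preimage of an intrinsic ball is comparable, on both sides and uniformly in $p_0$, to a Haar ball of $\W$ — and within it the measure-preservation of conjugation by $\V$-elements; the density-versus-Hausdorff comparison of the last step is then routine once the relevant covering lemma is invoked for $d$.
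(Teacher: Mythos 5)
This theorem is not proved in the paper at all: it is quoted verbatim from \cite[Theorem 3.9]{fs2016}, so there is no internal proof to compare against. Judged on its own merits, your argument is correct and is essentially the standard proof of this fact (the one in the cited reference follows the same scheme): compare $\s^{Q-k}\res\gr_\phi$ with $\nu=\Phi_\#(\leb^{2n+1-k}\res\W)$, prove Ahlfors regularity of $\nu$ via the cone condition, then transfer it by density comparison. The three pillars all check out. (i) The two-sided bound $\widetilde C\,\|P_\W(p^{-1}q)\|\le d(p,q)\le(1+M)\,\|P_\W(p^{-1}q)\|$ on the graph is exactly the content of Proposition \ref{pro_quasidist}, and your derivation from the cone condition plus Proposition \ref{prop_proj}(3) is right; note that only the lower Ahlfors bound for $\nu$ actually uses the intrinsic Lipschitz hypothesis, the upper one holding for any graph. (ii) The identity $P_\W(p_0^{-1}\Phi(w))=\phi(w_0)^{-1}(w_0^{-1}w)\phi(w_0)$ is correct (it is the same computation as \eqref{eq_aux33} in the paper), and your key observation — that this set is not a ball of $\W$ but the image of one under conjugation by a $\V$-element, which preserves the Haar measure of $\W$ because inner automorphisms of a nilpotent group are unipotent hence unimodular on the normal subgroup — is the right way to close the loop; in exponential coordinates conjugation is precisely the linear map $\operatorname{Ad}$, so the unit-Jacobian claim is rigorous. (iii) The final transfer $\nu\simeq\s^{Q-k}\res\gr_\phi$ from two-sided density bounds is the standard Federer/Mattila comparison (upper density bound gives $\nu\lesssim\s^{Q-k}\res\gr_\phi$ after recentring, lower density bound plus the $5r$-covering lemma gives the converse); this comparability is, incidentally, exactly the fact the paper itself imports from \cite[Remark 4.6]{vittone2021} in \emph{Step 2} of the proof of Theorem \ref{maindiff}. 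The only nitpick is cosmetic: your map is conjugation by $\phi(w_0)^{-1}$ rather than by $\phi(w_0)$, which changes nothing.
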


\begin{defi}
Let $\W,\V$ be complementary subgroups of $\H^n$ and  $\phi:A \se \W \to \V$. We define the \emph{graph distance} $\rho_\phi:A \times A \to [0,+\infty)$ as
\[
\rho_\phi(w_1,w_2) \ceq  \frac{1}{2}\left( \|(p_1^{-1}\cdot p_2)_\W\|+\|(p_2^{-1}\cdot p_1)_\W\| \right)\quad \forall w_1,w_2 \in A
\]
where $p_i=w_i \cdot \phi(w_i)$ for $i=1,2$.
\end{defi}

\begin{pro}[{\cite[Proposition 4.59]{sc2016} and \cite[Remark 3.6]{fs2016}}]\label{pro_quasidist}
Let $\W,\V$ be complementary subgroups of $\H^n$ and  $\phi:A \se \W \to \V$ be intrinsic $M$-Lipschitz. Then the graph distance $\rho_\phi$ is a quasi-distance, i.e.,
\begin{itemize}
\item[(i)] $\rho_\phi(w_1,w_2)=\rho_\phi(w_2,w_1)$ for every $w_1,w_2 \in A$,
\item[(ii)]$\rho_\phi(w_1,w_2)=0$ if and only if $w_1=w_2$,
\item[(iii)] there exists a constant $C>1$ such that 
\[
\rho_\phi(w_1,w_2) \leq C(\rho_\phi(w_1,w_3)+\rho_\phi(w_3,w_2)) \text{ for every }w_1,w_2,w_3 \in A.
\]
\end{itemize}
Moreover, there exists a positive constant $C=C(M)>1$ such that
\[
\frac{1}{C}\rho_\phi(w_1,w_2) \leq d(w_1 \cdot \phi(w_1),w_2 \cdot \phi(w_2)) \leq  C \rho_\phi(w_1,w_2)
\]
for every $w_1,w_2 \in A$. This means that $\rho_\phi$ is equivalent to the distance on the graph of $\phi$.
\end{pro}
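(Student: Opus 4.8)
My plan is to dispatch the two qualitative properties (i)--(ii) by directly unwinding the definition, then to establish the quantitative equivalence stated in the ``Moreover'' part, and finally to deduce the quasi-triangle inequality (iii) as a formal consequence of that equivalence together with the genuine triangle inequality for $d$. Throughout I would write $p_i\ceq w_i\cdot\phi(w_i)$ and, for a fixed pair, set $q\ceq p_1^{-1}\cdot p_2$, so that $p_2^{-1}\cdot p_1=q^{-1}$ and $\rho_\phi(w_1,w_2)=\tfrac12(\|q_\W\|+\|(q^{-1})_\W\|)$. Note that by left invariance $d(p_1,p_2)=\|p_1^{-1}\cdot p_2\|=\|q\|$ and $\|q^{-1}\|=\|q\|$, facts I would record at the outset.

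Property (i) is immediate, since exchanging $w_1$ and $w_2$ merely swaps the two summands in the definition. For (ii), if $w_1=w_2$ then $q=0$ and both components vanish; conversely $\rho_\phi(w_1,w_2)=0$ forces $\|q_\W\|=0$, hence $q=p_1^{-1}\cdot p_2\in\V$. Writing $p_2=p_1\cdot q=w_1\cdot(\phi(w_1)\cdot q)$ with $w_1\in\W$ and $\phi(w_1)\cdot q\in\V$ (as $\V$ is a subgroup), uniqueness of the decomposition $\H^n=\W\V$ (Remark \ref{rem_componenti}) gives $w_2=(p_2)_\W=w_1$.

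For the equivalence with $d$, the lower bound $\widetilde C\,\rho_\phi(w_1,w_2)\le d(p_1,p_2)$ follows at once from Proposition \ref{prop_proj}(3), which yields $\|q_\W\|\le\widetilde C^{-1}\|q\|$ and $\|(q^{-1})_\W\|\le\widetilde C^{-1}\|q^{-1}\|=\widetilde C^{-1}\|q\|$; averaging gives $\rho_\phi(w_1,w_2)\le\widetilde C^{-1}d(p_1,p_2)$, a step that uses no hypothesis on $\phi$. The upper bound $d(p_1,p_2)\le 2(1+M)\rho_\phi(w_1,w_2)$ is where intrinsic Lipschitz continuity enters: if $p_1\neq p_2$, the condition $C_{1/M}(p_1)\cap\gr_\phi=\{p_1\}$ forces $p_2\notin C_{1/M}(p_1)$, which after translating by $p_1^{-1}$ reads $q\notin C_{1/M}(0)$, i.e. $\|q_\W\|>\tfrac1M\|q_\V\|$, equivalently $\|q_\V\|\le M\|q_\W\|$. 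Combining with the elementary estimate of Proposition \ref{prop_proj}(3) gives $d(p_1,p_2)=\|q\|\le\|q_\W\|+\|q_\V\|\le(1+M)\|q_\W\|\le 2(1+M)\rho_\phi(w_1,w_2)$, the last inequality because $\|q_\W\|\le\|q_\W\|+\|(q^{-1})_\W\|=2\rho_\phi(w_1,w_2)$.

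Finally I would deduce (iii) from the equivalence: using the lower bound, the triangle inequality for the genuine distance $d$, and then the upper bound at each factor, $\rho_\phi(w_1,w_2)\le\widetilde C^{-1}\big(d(p_1,p_3)+d(p_3,p_2)\big)\le 2(1+M)\widetilde C^{-1}\big(\rho_\phi(w_1,w_3)+\rho_\phi(w_3,w_2)\big)$, so one takes $C$ to be the above constant enlarged to exceed $1$. The only genuinely geometric point, and hence the main obstacle, is the upper bound, where the cone-avoidance property defining intrinsic $M$-Lipschitz continuity must be converted into the linear comparison $\|q_\V\|\le M\|q_\W\|$ between the two projections of $q$; once this translation is secured, everything else is a routine application of Proposition \ref{prop_proj}(3) and the triangle inequality for $d$.
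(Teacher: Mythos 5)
Your argument is correct. A caveat on the comparison: the paper does not prove this proposition at all --- it is imported verbatim from \cite[Proposition 4.59]{sc2016} and \cite[Remark 3.6]{fs2016} --- so there is no internal proof to measure you against; judged on its own, your proof is sound and is essentially the standard one. Properties (i)--(ii) indeed need no hypothesis on $\phi$: symmetry is definitional, and your use of uniqueness of the decomposition $\H^n=\W\cdot\V$ (Remark \ref{rem_componenti}) to get $w_1=w_2$ from $(p_1^{-1}\cdot p_2)_\W=0$ is exactly right. The genuinely geometric step is, as you say, converting the cone-avoidance condition $p_1^{-1}\cdot p_2\notin C_{1/M}(0)$ into the component comparison $\|(p_1^{-1}\cdot p_2)_\V\|\le M\,\|(p_1^{-1}\cdot p_2)_\W\|$; this is the same mechanism the paper itself exploits later (in the Remark following Proposition \ref{pro_quasidist} and in Step 9 of Theorem \ref{maindiff}), and combined with Proposition \ref{prop_proj}(3) it gives both bounds of the metric equivalence with constants $\widetilde C$ and $2(1+M)$ as you computed. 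Deriving the quasi-triangle inequality (iii) formally from that equivalence plus the triangle inequality for $d$ is clean and arguably tidier than proving (iii) directly; the only cosmetic omission is that for the single constant $C=C(M)>1$ of the statement one should take $C=\max\{\widetilde C^{-1},\,2(1+M)\}$ (noting $\widetilde C$ also depends on the fixed pair $\W,\V$), which is implicit in what you wrote.
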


\begin{rem}
If $\W$ is a normal subgroup of $\he^n$, then Definition \ref{pointLip} is equivalent to the following one, which generalizes \eqref{def_sf}:
\[S_\phi=\left\{w\in A: \limsup_{\W\,\ni\, y\to w}\frac{d(\phi(y),\phi(w))}{\rho_\phi(y,w)}<+\infty\right\}.\]
In fact, on one side, assume that there exists $\beta>0$ and $U$ open in $A$ with $w\in U$ and 
\[C_\beta(w\cdot\phi(w))\cap \gr_{\phi|U}=\{w\cdot\phi(w)\}.\]
Then, for every $y\in U$ with $y\neq w$, it holds $y\cdot\phi(y)\not\in C_\beta(w\cdot \phi(w))$. Denoting by $p:=w\cdot \phi(w)$ and $q:=y\cdot \phi(y)$, by definition of cone, we get that $p^{-1}\cdot q\not \in C_\beta (0)$ and so
\[\|(p^{-1} \cdot q)_\W\|>\beta \|(p^{-1} \cdot q)_\V\|=\beta\|\phi(w)^{-1}\cdot \phi(y)\|,\]
where the last identity follows by the fact that $P_\V$ is a group homomorphism (see Proposition \ref{prop_proj}). Hence
\[d(\phi(y),\phi(w))=\|\phi(w)^{-1}\cdot \phi(y)\|< \frac{1}{\beta}\|(p^{-1}\cdot q)_\W\|\leq \frac{1}{\beta}\Big[\|(p^{-1}\cdot q)_\W\|+\|(q^{-1}\cdot p)_\W\|\Big]=\frac{2}{\beta}\rho_\phi(y,w)
\] for every $y\in U$ with $y\neq w.$ It follows
that 
\[\limsup_{\W\,\ni\, y\to w} \,\frac{d(\phi(y),\phi(w))}{\rho_\phi(y,w)}\leq \frac{2}{\beta}<+\infty.\]
On the other hand, assume that 
\[\limsup_{\W\,\ni\, y\to w} \,\frac{d(\phi(y),\phi(w))}{\rho_\phi(y,w)}<+\infty,\]
which implies that there exists $L>0$ and $U\ni w$ open in $A$ such that, for every $y\in U$,
\begin{equation}\label{defoflimsup}\|\phi^{-1}(w)\cdot \phi(y)\|=d(\phi(y),\phi(w))\leq L\rho_\phi(y,w).\end{equation} For every $y\in U$, let us denote as before $q:=y\cdot \phi(y)$ and $p:=w\cdot\phi(w)$.\\
\textbf{Claim:} $\exists\, C>0$ (not depending on $y)$ such that $\rho_\phi(w,y)\leq C\|(p^{-1}\cdot q)_\W\|$ for every $y\in U$.\\
Assuming the Claim, thanks to \eqref{defoflimsup}, we get that, for a suitable constant $\tilde L$ and for every $y\in U$,
\[\|(p^{-1}\cdot q)_\V\|=\|\phi(w)^{-1}\cdot \phi(y)\|\leq \tilde L\|(p^{-1}\cdot q)_\W\|,\]
which means that the point $p^{-1}q$ does not belong to the cone $C_{\frac{1}{\tilde L}}(0)$. Therefore $q\not\in C_{\frac{1}{\tilde L}}(p)$ for every $y\in U$ and 
\[C_{\frac{1}{\tilde L}}(p)\cap \gr_{\phi|U}=\{p\}\] concluding the proof of the equivalence.\\
It remains to show the validity of the Claim: such a conclusion can be obtained proceeding as in the second part of the proof of Theorem 4.60 in \cite{sc2016}. In particular one get that, for every $\epsilon>0$ there exists a constant $\overline C=\overline C(\epsilon)$ such that for every $y\in A$
\[\rho_\phi(w,y)\leq \overline C(\epsilon)\|(p^{-1}\cdot q)_\W\|+\epsilon \|(p^{-1}\cdot q)_\V\|.\]
From \ref{defoflimsup}, if $y\in U$ we get
\[\rho_\phi(w,y)\leq \overline C(\epsilon)\|(p^{-1}\cdot 
 q)_\W\|+\epsilon L\rho_\phi(w,y).\]
Fixing $\epsilon < 1/L$, we finally get
\[\rho_\phi(w,y)\leq \frac{\overline C}{1-\epsilon L}\|(p^{-1}\cdot q)_\W\|\]
for every $y\in U$, proving the Claim.
\end{rem}

\begin{definition}
Let $\W,\V$ be complementary subgroups of $\H^n$ and $\phi:A \se \W \to \V$. We say that $\phi$ is an \textit{intrinsic linear} map if its graph $\gr_\phi$ is a homogeneous subgroup of $\H^n$.
\end{definition}
\begin{rem}
Another characterization of the notion of intrinsic linear map can be given as follows, see \cite{vittone2021}. For every $w\in\mathbb{W}$ define $w_H\in\mathbb{R}^{2n+1-k}$ as
\begin{align*}
   & w_H:=(x_{k+1},\ldots, y_n)\quad \mbox{if}\ k<n\quad \mbox{and}\quad  w=(x_{k+1},\ldots, y_n,t),\\
   & w_H:=(y_1,\ldots, y_n)\quad\quad \mbox{if}\ k=n\quad \mbox{and}\quad  w=(y_1,\ldots, y_n,t).
\end{align*}
Then, $\phi$ is intrinsic linear if and only if there exists a $k\times (2n-k)$ matrix $M$ such that, for every $w\in\mathbb{W}$, $\phi(w)=Mw_H$ (identifying $M$ with a linear map $M:\R^{2n-k}\to\R^k\equiv \V$).
\end{rem}

\begin{defi}\label{intdiff} 
Let $\W,\V$ be complementary subgroups of $\H^n$ and $\phi:A \se \W \to \V$ where $A$ is a relatively open set. We say that $\phi$ is \textit{intrinsically differentiable} at $ \bar w \in A$ if there exists an intrinsic linear map $d\phi_{\bar w}:\W\to\V$ such that
\[\lim_{\W\,\ni\,w\to 0}\frac{d(\phi_{\bar w}(w),d\phi_{\bar w}(w))}{\|w\|}=0,\]
    where $\phi_{\bar w}$ is the map whose intrinsic graph is given by $(\bar w\cdot\phi(\bar w))^{-1} \cdot \gr_\phi$.
    The map $d\phi_{\bar w}$ is called the \textit{intrinsic differential} of $\phi$ at $\bar w$.\end{defi}
    \begin{rem}
If $\W$ is a normal subgroup, then the explicit expression of $\phi_{\bar w}$ is given by
    \begin{equation}\label{Wnorm}\phi_{\bar w}(w)\ceq \phi(\bar w)^{-1} \cdot \phi(\bar w \cdot \phi(\bar w)\cdot w \cdot \phi(\bar w)^{-1}).\end{equation}
If instead $\V$ is normal, then $\phi_{\bar w}$ can be written as follows
\begin{equation}\label{Vnorm}\phi_{\bar w}(w)\ceq w^{-1}\cdot\phi(\bar w)^{-1}\cdot w\cdot \phi(\bar w\cdot w).\end{equation} 
A general expression for the map $\phi_{\bar w}$, which has as special cases formulas \eqref{Wnorm} and \eqref{Vnorm}, can be found for instance in \cite[Proposition 2.21]{fs2016}.
\end{rem}
In the proof of the Stepanov differentiability theorem we will need two main results: an equivalent characterization of intrinsic differentiability and the Rademacher Theorem for intrinsic graphs. We now state both.
\begin{teo}[{\cite[Theorem 4.15]{fssc2010}, see also \cite[Theorem 3.2.8]{fms2014}}]\label{teo_fsscequivcar}
Fix $1 \leq k \leq n$ and let $\W,\V$ be complementary subgroups of $\H^n$ where $\V$ is a horizontal subgroup of dimension $k$.  Let $A \se \W$ be an open set and $\phi:A \to \V$. Fix $\bar w \in A$ and define $\bar p \ceq \bar w \cdot \phi (\bar w)$. Then the following statements are equivalent:
\begin{enumerate}
\item[(i)] $\phi$ is intrinsically differentiable at $\bar w$;
\item[(ii)] there exists a vertical subgroup $\T_{\phi,\bar p}$, complementary to $\V$, such that for every $\alpha>0$ there exists $\bar r=\bar r(\phi,\bar w,\alpha)>0$ such that
\[
C_{\T_{\phi,\bar p},\V}(\bar p,\alpha) \cap \gr_{\phi|_{B_\W(\bar w,\bar r)}}=\lbrace \bar p \rbrace.
\]
\end{enumerate}
\end{teo}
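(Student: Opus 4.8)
The plan is to transfer everything to the origin via the blow-up $\gr_{\phi_{\bar w}}=\bar p^{-1}\cdot\gr_\phi$ and to use the bijection between homogeneous subgroups complementary to $\V$ and intrinsic linear maps $\W\to\V$. Since $\V$ is horizontal of dimension $k$, every such complementary subgroup $\T$ is vertical of dimension $2n+1-k$ and equals the graph $\T=\gr_\ell$ of a unique intrinsic linear $\ell\colon\W\to\V$, and conversely. The computation on which both implications rest is the following: for $w'\in\W$ set $q\ceq w'\cdot\phi_{\bar w}(w')\in\gr_{\phi_{\bar w}}$; writing
\[
q=\big(w'\cdot\ell(w')\big)\cdot\big(\ell(w')^{-1}\cdot\phi_{\bar w}(w')\big),
\]
the first factor lies in $\T=\gr_\ell$ and the second in $\V$, so $q_\T=w'\cdot\ell(w')$ and $q_\V=\ell(w')^{-1}\cdot\phi_{\bar w}(w')$; in particular $\|q_\V\|=d(\ell(w'),\phi_{\bar w}(w'))$. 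Since $P_\W(q_\T)=w'$, Proposition \ref{prop_proj}(3) gives $\|q_\T\|\ge\widetilde C\|w'\|$, while the matrix form of the intrinsic linear map $\ell$ together with the horizontality of $\V$ yields $\|\ell(w')\|\le C\|w'\|$ and hence $\|q_\T\|\le C'\|w'\|$. Thus $\|q_\T\|$ is comparable to $\|w'\|$, and whether $q$ lies in the cone $C_{\T,\V}(0,\alpha)$ is governed exactly by the size of $d(\ell(w'),\phi_{\bar w}(w'))/\|w'\|$.

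For (i)$\Rightarrow$(ii) I take $\T\ceq\gr_{d\phi_{\bar w}}$ (vertical and complementary to $\V$) and $\ell\ceq d\phi_{\bar w}$. Intrinsic differentiability reads $\|q_\V\|=d(d\phi_{\bar w}(w'),\phi_{\bar w}(w'))=o(\|w'\|)$, which combined with $\|q_\T\|\ge\widetilde C\|w'\|$ gives $\|q_\T\|/\|q_\V\|\to+\infty$ as $w'\to0$. Hence for each $\alpha>0$ there is $r_\alpha>0$ such that every nonzero $q=w'\cdot\phi_{\bar w}(w')$ with $\|w'\|<r_\alpha$ satisfies $\|q_\T\|>\alpha\|q_\V\|$, i.e. $q\notin C_{\T,\V}(0,\alpha)$. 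Left-translating by $\bar p$ (using $C_{\T,\V}(\bar p,\alpha)=\bar p\cdot C_{\T,\V}(0,\alpha)$ and $\bar p^{-1}\cdot\gr_\phi=\gr_{\phi_{\bar w}}$) yields the cone condition in (ii).

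For (ii)$\Rightarrow$(i) I let $\ell$ be the intrinsic linear map with $\gr_\ell=\T_{\phi,\bar p}$ and set $d\phi_{\bar w}\ceq\ell$. The cone condition says that for every $\alpha>0$ there is $\bar r$ such that every nonzero $q=w'\cdot\phi_{\bar w}(w')$ in the relevant range has $\|q_\T\|>\alpha\|q_\V\|$; using $\|q_\T\|\le C'\|w'\|$ this gives $d(\ell(w'),\phi_{\bar w}(w'))=\|q_\V\|<(C'/\alpha)\|w'\|$. Given $\epsilon>0$, applying this with $\alpha\ceq C'/\epsilon$ shows $d(\ell(w'),\phi_{\bar w}(w'))\le\epsilon\|w'\|$ for all sufficiently small $\|w'\|$, i.e. $d(\ell(w'),\phi_{\bar w}(w'))=o(\|w'\|)$, which is precisely intrinsic differentiability at $\bar w$ with differential $\ell$.

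What one might fear is the main obstacle — matching the two neighborhood regimes, since (ii) is phrased through the $\W$-ball $B_\W(\bar w,\bar r)$ while differentiability is phrased through the smallness of $\|w'\|=\|P_\W(\bar p^{-1}\cdot w\cdot\phi(w))\|$ — turns out to be harmless: using Proposition \ref{prop_proj}(2) (valid since $\W$ is normal) one computes explicitly $w'=\phi(\bar w)^{-1}\cdot\bar w^{-1}\cdot w\cdot\phi(\bar w)$, a homeomorphism of $\W$ carrying $\bar w$ to $0$, so that ``$w\in B_\W(\bar w,\bar r)$'' and ``$\|w'\|$ small'' are cofinal neighborhood bases (and no continuity of $\phi$ is needed). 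The steps deserving the most care are instead the verification of the decomposition of $q$ above and the two-sided comparability $\widetilde C\|w'\|\le\|q_\T\|\le C'\|w'\|$: the lower bound follows from Proposition \ref{prop_proj}(3), whereas the upper bound is where one must invoke that $\V$ is horizontal and $\ell$ intrinsic linear, so that $\|\ell(w')\|\le C\|w'\|$ — it is exactly this estimate that makes the cone condition and the differentiability quotient \emph{equivalent} rather than merely one-sidedly related.
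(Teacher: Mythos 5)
There is nothing in the paper to compare your proof against: Theorem \ref{teo_fsscequivcar} is imported verbatim from \cite[Theorem 4.15]{fssc2010} (see also \cite[Theorem 3.2.8]{fms2014}) and the paper gives no proof of it, only uses it as a black box in the proofs of Theorems \ref{maindiff} and \ref{codim1}. Judged on its own, your argument is correct and self-contained. The decomposition $q=(w'\cdot\ell(w'))\cdot(\ell(w')^{-1}\cdot\phi_{\bar w}(w'))$ is exact by associativity, its factors lie in $\gr_\ell=\T$ and in $\V$ respectively, and uniqueness of components (Remark \ref{rem_componenti}) identifies $q_\T=w'\cdot\ell(w')$ and $q_\V^{\T}=\ell(w')^{-1}\cdot\phi_{\bar w}(w')$; the lower bound $\|q_\T\|\geq \widetilde C\|w'\|$ is exactly Proposition \ref{prop_proj}(3), and the upper bound follows from the matrix representation $\ell(w)=Mw_H$ of intrinsic linear maps together with $|w_H|\leq C\|w\|$ and the bi-Lipschitz identification of the horizontal subgroup $\V$ with $(\R^k,|\cdot|)$. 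With the two-sided comparability in hand, both implications go through precisely as you describe, and the neighborhood-matching via the homeomorphism $w\mapsto \phi(\bar w)^{-1}\cdot\bar w^{-1}\cdot w\cdot\phi(\bar w)$ of $\W$ is indeed harmless (the paper performs the same conjugation computation in \emph{Step 3} of Theorem \ref{maindiff}, around \eqref{eq_tranfun}). The only items you assert without argument, and which a complete write-up should include, are the two halves of the graph correspondence: that $\gr_{d\phi_{\bar w}}$ is a vertical subgroup complementary to $\V$, and conversely that every vertical subgroup $\T$ complementary to $\V$ equals $\gr_\ell$ for a unique intrinsic linear $\ell:\W\to\V$. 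Both are standard and short: for the converse, given $w\in\W$ write $w=w_\T\cdot w_\V^{\T}$, so that $w\cdot (w_\V^{\T})^{-1}\in\T$, and uniqueness of such a $\V$-factor follows from $\T\cap\V=\lbrace 0\rbrace$; hence $\T=\gr_\ell$ with $\ell(w)\ceq (w_\V^{\T})^{-1}$, and $\ell$ is intrinsic linear by definition because its graph $\T$ is a homogeneous subgroup.
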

\begin{teo}[{\cite[Theorem 1.1 and Theorem 1.5]{vittone2021}}]\label{teo_RadVit}
Fix $1 \leq k \leq n$ and let $\W,\V$ be complementary subgroups of $\H^n$ where $\V$ is a horizontal subgroup of dimension $k$.  Let $A \se \W$ be a set and $\phi:A \to \V$ be intrinsic Lipschitz. Then there exists an intrinsic Lipschitz map $\tilde{\phi}:\W \to \V$ such that $\tilde{\phi}|_A \equiv \phi$ and $\tilde{\phi}$ is intrinsically differentiable almost everywhere on $\W$.
\end{teo}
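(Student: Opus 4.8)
The statement bundles together two results: an \emph{extension} theorem, producing $\tilde\phi$ on all of $\W$, and the almost everywhere \emph{intrinsic differentiability} of $\tilde\phi$. The plan is to treat them separately, and to reduce the second — the substantial one — to the existence of intrinsic tangent subgroups at almost every point of the graph.

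For the extension I would produce $\tilde\phi$ by a cone-preserving construction. Since $\V$ is horizontal (hence abelian) and $\W$ is normal, the projection $P_\V$ is a Lipschitz homomorphism (Proposition \ref{prop_proj}), so $\V$-valued data combine well under the group law. I would then perform a Whitney decomposition of $\W\setminus A$ and define $\tilde\phi$ on each cube by a partition-of-unity average of the values of $\phi$ at nearby points of $A$, and finally verify that the defining cone condition $C_{1/M'}(p)\cap\gr_{\tilde\phi}=\{p\}$ persists with a controlled constant $M'$; this is a quantitative consequence of the intrinsic $M$-Lipschitz estimate for $\phi$ on $A$, phrased through the graph quasi-distance $\rho_\phi$ of Proposition \ref{pro_quasidist}. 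In the scalar case $k=1$ the same can be achieved more simply by a McShane-type inf/sup extension, exploiting the order structure of $\V\cong\R$.

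The differentiability statement I would pass entirely to the level of the graph. By Theorem \ref{teo_fsscequivcar}, $\tilde\phi$ is intrinsically differentiable at $\bar w$ as soon as $\gr_{\tilde\phi}$ admits at $\bar p=\bar w\cdot\tilde\phi(\bar w)$ a complementary vertical subgroup $\T_{\bar p}$ satisfying the cone-exclusion property; equivalently, as soon as the blow-ups $\delta_{1/r}\big(\bar p^{-1}\cdot\gr_{\tilde\phi}\big)$ converge, as $r\to 0$, to the graph of an intrinsic linear map. By Theorem \ref{teo_grahlreg} the graph is $(Q-k)$-Ahlfors regular, so it is a genuine $(Q-k)$-set and these blow-up statements are meaningful $\s^{Q-k}$-almost everywhere. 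Thus the whole problem reduces to showing that at almost every point the blow-up limit is \emph{flat}, i.e.\ a homogeneous subgroup complementary to $\V$.

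The flatness of blow-ups is the heart of the matter, and I expect it to be the main obstacle. When $k=1$ I would use the PDE characterization: an intrinsic Lipschitz $\tilde\phi$ is a broad${}^*$ solution with bounded intrinsic gradient $\nabla^{\tilde\phi}\tilde\phi\in L^\infty$, and differentiability a.e.\ follows by integrating along the Lipschitz characteristics of the Burgers-type vector fields $\nabla^{\tilde\phi}$, together with a Lebesgue-point argument in the transverse directions. For $1<k\le n$ this argument fails, because the corresponding system is genuinely coupled; here the natural route is to associate to $\gr_{\tilde\phi}$ an integer-multiplicity rectifiable current, to observe that it is (locally) a cycle precisely because it is a complete graph with horizontal target, and to apply a constancy-type theorem forcing its approximate tangents to be a.e.\ homogeneous subgroups complementary to $\V$. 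Identifying the correct multiplicity-one current, proving its closedness, and running the constancy argument — thereby ruling out the non-flat blow-ups permitted by the mere Hölder regularity of an intrinsic Lipschitz graph in the vertical variable — is exactly where the difficulty concentrates.
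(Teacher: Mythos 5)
First, a structural point: this paper does not prove Theorem \ref{teo_RadVit} at all --- it is imported verbatim from \cite[Theorems 1.1 and 1.5]{vittone2021} and used as a black box in the proofs of Theorems \ref{maindiff} and \ref{codim1}. So your attempt can only be measured against the proof in that reference, not against anything internal to this paper. Measured that way, your outline of the differentiability half points in essentially the right direction: the argument of \cite{vittone2021} does proceed by attaching to the entire graph a multiplicity-one current, proving that it is a cycle, and invoking a constancy-type theorem to force almost every blow-up to be a vertical plane; and your $k=1$ alternative via the bounded intrinsic gradient and characteristics is the historically correct route for codimension one. The preliminary reductions (extend first; then use Theorem \ref{teo_fsscequivcar} together with the Ahlfors regularity of Theorem \ref{teo_grahlreg} to recast differentiability as flatness of blow-ups) are also sound.

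The gaps are nonetheless genuine. (i) For the differentiability core you defer exactly the content of the theorem, and the plan as written --- ``integer-multiplicity rectifiable current \dots constancy-type theorem'' --- cannot be run with classical Federer--Fleming currents: $\gr_{\tilde\phi}$ has Hausdorff dimension $Q-k=2n+2-k$, strictly larger than its topological dimension $2n+1-k$, and is merely $\tfrac{1}{2}$-H\"older in the vertical direction, so it carries no useful classical rectifiable-current structure. One must instead work with Heisenberg currents built on Rumin's complex of differential forms, and the constancy theorem for such currents (valid in codimension $\le n$, which is precisely where the hypothesis $k\le n$ enters) is itself one of the main results proved from scratch in \cite{vittone2021}, as is the closedness of the graph current at this low regularity. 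Observing that this is ``where the difficulty concentrates'' names the gap; it does not close it. (ii) The extension half is also unsupported: a Whitney/partition-of-unity average of values of $\phi$ has no reason to satisfy the intrinsic cone condition, because that condition is centered at the graph point $w\cdot\phi(w)$ --- equivalently, the quasi-distance $\rho_\phi$ of Proposition \ref{pro_quasidist}, in which you would like a Lipschitz estimate, moves with the very function being constructed, so the constraint is self-referential rather than convex. This is exactly why the extension for $k\ge 2$ required a new argument in \cite{vittone2021}, whereas for $k=1$ the order structure of $\V\equiv\R$ permits the McShane-type sup/inf construction (Lemma \ref{infLip}, \cite{fssc2010}) that you correctly fall back on in the scalar case.
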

\section{Proof of Stepanov Theorem}

\subsection{Proof of Stepanov Theorem for intrinsic graphs of arbitrary codimension}
In this subsection, we provide the proof of Stepanov differentiability theorem starting with the case of maps from horizontal subgroups to normal subgroups and then our main result which involves maps going from normal subgroups to horizontal subgroups.\\
The first case is a fairly easy consequence of the  ``classical" Stepanov theorem for maps between Carnot groups \cite{vodopyanov2000}. Indeed, by applying \cite[Proposition 3.25]{as2009},  we can reduce the intrinsic differentiability of a map to the Pansu differentiability of its graph map.
\begin{teo}\label{stepanov1}
Fix $1 \leq k \leq n$ and let $\W,\V$ be complementary subgroups of $\H^n$ where $\V$ is a horizontal subgroup of dimension $k$. Let $A \se \V$ be an open set and $\phi:A \to \W$. Then $\phi$ is intrinsically differentiable almost everywhere on $S_\phi$.
\end{teo}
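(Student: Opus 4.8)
The plan is to transfer the statement to the classical Stepanov theorem for maps between Carnot groups via the graph map, exploiting that here the domain $\V$ is horizontal and hence a particularly simple (abelian, step one) Carnot group biLipschitz to $(\R^k,|\cdot|)$. Concretely, first I would introduce the graph map $\Phi\colon A\to\H^n$, $\Phi(v)\ceq v\cdot\phi(v)$, and regard it as a map between the Carnot groups $\V\cong\R^k$ and $\H^n$. By \cite[Proposition 3.25]{as2009} the intrinsic differentiability of $\phi$ at a point $\bar v$ is equivalent to the Pansu differentiability of $\Phi$ at $\bar v$; thus it suffices to prove that $\Phi$ is P-differentiable almost everywhere on $S_\phi$, and for this I would invoke the Stepanov theorem of Vodopyanov \cite{vodopyanov2000}, provided I can identify $S_\phi$ with the metric pointwise-Lipschitz set of $\Phi$.

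\textbf{Key step.} The heart of the argument is the identity
$$S_\phi=S_\Phi\ceq\Big\{\bar v\in A:\ \limsup_{\V\ni v\to\bar v}\frac{d(\Phi(v),\Phi(\bar v))}{d(v,\bar v)}<+\infty\Big\}.$$
To prove it I would use that $\W$ is a vertical, hence normal, subgroup, so that by Proposition \ref{prop_proj}(1) the projection $P_\V$ is a group homomorphism. Writing $\bar p\ceq\Phi(\bar v)$, $q\ceq\Phi(v)$, and using $\phi(\bar v),\phi(v)\in\W$ together with $\bar v, v\in\V$, one gets $P_\V(\bar p)=\bar v$ and $P_\V(q)=v$, whence $(\bar p^{-1}\cdot q)_\V=\bar v^{-1}\cdot v$ and therefore $\|(\bar p^{-1}\cdot q)_\V\|=d(v,\bar v)$. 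Combining this with the two-sided estimate of Proposition \ref{prop_proj}(3),
$$\widetilde C\big(\|(\bar p^{-1}\cdot q)_\W\|+d(v,\bar v)\big)\le d(\Phi(v),\Phi(\bar v))\le\|(\bar p^{-1}\cdot q)_\W\|+d(v,\bar v),$$
the finiteness of the limsup defining $S_\Phi$ becomes equivalent to a bound of the form $\|(\bar p^{-1}\cdot q)_\W\|\le L\,\|(\bar p^{-1}\cdot q)_\V\|$ for $v$ near $\bar v$, i.e. to the requirement that $q$ avoid the cone $C_{1/L}(\bar p)$ near $\bar p$; this is precisely the local intrinsic Lipschitz condition defining $S_\phi$, the correspondence between openings and Lipschitz constants being explicit.

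\textbf{Conclusion.} Granted $S_\phi=S_\Phi$, the Stepanov theorem for Carnot group maps yields that $\Phi$ is P-differentiable $\mathcal L^k$-almost everywhere on $S_\phi$, and since $\V\cong\R^k$ carries exactly the Haar measure $\mathcal L^k$ there is no discrepancy between the ambient notion of ``almost everywhere'' and the one on the domain. Transferring back through \cite[Proposition 3.25]{as2009} gives the intrinsic differentiability of $\phi$ almost everywhere on $S_\phi$. I expect the only genuinely delicate point to be the bookkeeping in the equivalence $S_\phi=S_\Phi$ — namely, checking that the homomorphism property of $P_\V$ and the estimate of Proposition \ref{prop_proj}(3) really convert the metric pointwise-Lipschitz condition into the cone condition with matching constants — whereas the reduction to the Euclidean-domain Carnot Stepanov theorem is, as claimed, essentially formal.
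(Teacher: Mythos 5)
Your proposal is correct and follows essentially the same route as the paper's own proof: both introduce the graph map $\Phi(v)=v\cdot\phi(v)$, identify $S_\phi$ with the pointwise metric Lipschitz set of $\Phi$ using the homomorphism property of $P_\V$ (valid since $\W$ is normal) together with the two-sided norm estimate of Proposition \ref{prop_proj}, and then conclude via Vodopyanov's Stepanov theorem for Carnot-group-valued maps and the equivalence of intrinsic and Pansu differentiability from \cite[Proposition 3.25]{as2009}. The only cosmetic difference is that you package the two implications of the equivalence $S_\phi=S_\Phi$ into a single two-sided estimate, whereas the paper proves the two inclusions separately.
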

\begin{proof}
  Let $\phi:A\subset \V\to\W$ be as in the assumptions.
  Define $\Phi_\phi:A\to \h^n$ to be the graph map:
  \[\Phi_\phi (v)\ceq v\cdot\phi(v).\]
  We start by proving the following elementary fact: $\bar v\in S_\phi$ if and only if $\Phi_\phi$ is pointwise (metric) Lipschitz continuous at $\bar v$, that is
  \begin{equation}\label{metriclip}\limsup_{A\,\ni\, v\,\to\,\bar v}\frac{\|(\Phi_\phi(\bar v))^{-1}\cdot \Phi_\phi(v)\|}{\|\bar v^{-1}\cdot v\|}<+\infty.\end{equation}
  Suppose first $\bar v\in S_\phi$. This means that there exists $\beta>0$ and $U\subset A$ open containing $\bar v$ such that
  \[C_{\V,\W}(\Phi_\phi(\bar v), \beta)\cap \gr_{\phi|_{U}}=\{\Phi_\phi(\bar v)\}.\]
  Equivalently, after a left translation and \eqref{tracone},
  \[C_{\V,\W}(e, \beta)\cap (\Phi_\phi(\bar v))^{-1}\cdot \gr_{\phi|_U}=\{e\}.\]
  For notational simplicity, let us write $\bar p:=\Phi_\phi(\bar v)$.
  By Definition \ref{def:cone}, we get that, for every point $p=v\cdot \phi(v)\in \gr_{\phi|_{U}}$, 
  \[\|(\bar p^{-1}\cdot p)_\W\|\leq \frac{1}{\beta}\|(\bar p^{-1}\cdot p)_\V\|.\]
Since $\W$ is normal, the projection on $\V$ is a group homomorphism (Proposition \ref{prop_proj}): hence $(\bar p^{-1}\cdot p)_\V=\bar v^{-1}\cdot v$.
  Therefore we get 
  \[\|\bar p^{-1}\cdot p\|\leq \|(\bar p^{-1}\cdot p)_\W\|+\|(\bar p^{-1}\cdot p)_\V\|\leq (1+\frac{1}{\beta})\|\bar v^{-1}\cdot{v}\|,\]
  for every $v\in U$, which implies \eqref{metriclip}.\\
 On the other hand, assuming \eqref{metriclip} and keeping the same notation as before, there exists $U\subset A$ open with $\bar v\in U$ and $L>0$ such that 
 \[\|\bar p^{-1}\cdot p\|\leq L\|\bar v^{-1}\cdot v\|=L\|(\bar p^{-1}\cdot p)_\V\|\]
 for all $v\in U$. By Proposition $\ref{prop_proj}$, there exists a constant $\widetilde C>0$ such that $\|\bar p^{-1}\cdot p\|\geq \widetilde C\|(\bar p^{-1}\cdot p)_\W\|$. We deduce that
 \[\|(\bar p^{-1}\cdot p)_\W\|\leq \frac{L}{\widetilde C} \|(\bar p^{-1}\cdot p)_\V\|\]
 which implies, arguing as in the first part of the proof, that $\bar v\in S_\phi$.\\
 We move now to the proof of the Theorem: since $\V$ is horizontal, we can identify  $\V\equiv \R^k$  for some $1\leq k\leq n$. Hence, by applying the ``classical" Stepanov differentiability theorem for maps between Carnot groups (see \cite[Theorem 3.1]{vodopyanov2000}), we deduce that the graph map $\Phi_\phi:A\subset \V\equiv \R^k\to \h^n$ is Pansu-differentiable almost everywhere in the set of points where $\Phi_\phi$ is pointwise Lipschitz continuous, which coincides by the previous argument with $S_\phi$. Then we conclude by applying \cite[Proposition 3.25]{as2009}: a map $\phi:A\subset \V\to\W$ is intrinsically differentiable at $\bar v$ if and only if the graph map $\Phi_\phi: A\subset \V\to \h^n$ is Pansu-differentiable at $\bar v$.
\end{proof}
\begin{rem}
 We point out that all the results used in the proof of Theorem \ref{stepanov1} hold in general Carnot groups $\G$ which can be written as $\G=\V\W$ where $\V$ is a horizontal subgroup and $\W$ is normal. Therefore,  Theorem \ref{stepanov1} holds even in the generality described above for maps $\phi:A\subseteq\V\to\W$. 
\end{rem}
Let us now move to the main result of this paper, concerning the proof of Stepanov differentiability theorem for maps from a normal subgroup to an abelian one. Theorem \ref{maindiff} below combined with Theorem \ref{stepanov1} completes the proof of the Stepanov differentiability theorem.

\begin{teo}\label{maindiff}
Fix $1 \leq k \leq n$ and let $\W,\V$ be complementary subgroups of $\H^n$ where $\V$ is a horizontal subgroup of dimension $k$. Let $A \se \W$ be an open set and $\phi:A \to \V$. Then $\phi$ is intrinsically differentiable almost everywhere on $S_\phi$.
\end{teo}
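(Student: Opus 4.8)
My plan is to transport the Federer-style proof of the classical Stepanov theorem to the intrinsic graph of $\phi$, using the Rademacher theorem (Theorem \ref{teo_RadVit}), the Ahlfors regularity of the graph measure (Theorem \ref{teo_grahlreg}) and the quasi-distance $\rho_\phi$. Here $\W$ is the normal subgroup, so $P_\V$ is a homomorphism (Proposition \ref{prop_proj}) and $\phi_{\bar w}$ is given by \eqref{Wnorm}. First I would decompose $S_\phi$ into countably many pieces on which $\phi$ is \emph{uniformly} intrinsic Lipschitz. Fixing a countable basis $\{B_i\}_i$ of relatively open balls of $\W$ and $m\in\N$, I set
\[
F_{i,m}\ceq\{w\in B_i\cap A : C_{1/m}(w\cdot\phi(w))\cap\gr_{\phi|_{B_i}}=\{w\cdot\phi(w)\}\}.
\]
By Definition \ref{pointLip} every $w\in S_\phi$ lies in some $F_{i,m}$ (choose $1/m\le\beta$ and $B_i$ inside the neighbourhood $U$), whence $S_\phi=\bigcup_{i,m}F_{i,m}$. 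If $w\in B_i$ and $w^*\in F_{i,m}$ then $w\cdot\phi(w)\notin C_{1/m}(w^*\cdot\phi(w^*))$; in particular $\phi|_{F_{i,m}}$ is intrinsic $m$-Lipschitz, and unwinding the cone condition with $P_\V$ a homomorphism yields the uniform bound
\[
d(\phi(w),\phi(w^*))\le 2m\,\rho_\phi(w,w^*)\qquad\text{for all }w^*\in F_{i,m},\ w\in B_i.
\]
Since $S_\phi=\bigcup_{i,m}F_{i,m}$, it suffices to prove that $\phi$ is intrinsically differentiable at almost every point of each $F_{i,m}$.

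Next, by Theorem \ref{teo_RadVit} I would extend $\phi|_{F_{i,m}}$ to an intrinsic Lipschitz map $\tilde\phi\colon\W\to\V$ that is intrinsically differentiable almost everywhere, and fix $\bar w\in F_{i,m}$ which is simultaneously a differentiability point of $\tilde\phi$ and a density point of $\gr_{\tilde\phi|_{F_{i,m}}}$ for $\s^{Q-k}\res\gr_{\tilde\phi}$ (Ahlfors regular by Theorem \ref{teo_grahlreg}); both properties hold a.e., so almost every point of $F_{i,m}$ qualifies. Replacing $\phi,\tilde\phi$ by $\phi_{\bar w},\tilde\phi_{\bar w}$, i.e. translating the graphs by $\bar p^{-1}$ (which preserves intrinsic Lipschitzianity and, being an isometry, the density point property, transported to $\W$ via the bi-Lipschitz graph map of Proposition \ref{pro_quasidist}), I may assume $\bar w=0$, $\phi(0)=\tilde\phi(0)=0$, that $\tilde\phi$ is intrinsic Lipschitz with differential $\ell\ceq d\tilde\phi_0$, that $\phi=\tilde\phi$ on the reparametrised set $F$, and that $0$ is a density point of $F$ in $(\W,\rho_{\tilde\phi})$. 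The goal becomes $d(\phi(w),\ell(w))=o(\|w\|)$ as $w\to0$.

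The heart of the matter is this transfer step. Writing $d(\phi(w),\ell(w))\le d(\phi(w),\tilde\phi(w))+d(\tilde\phi(w),\ell(w))$, the second term is $o(\|w\|)$ by differentiability of $\tilde\phi$, so everything reduces to the discrepancy $d(\phi(w),\tilde\phi(w))$, which vanishes on $F$. For $w\notin F$ I would choose a companion $w^*\in F$ with $\rho_{\tilde\phi}(w,w^*)=o(\|w\|)$: density of $F$ at $0$ provides exactly this, since $\rho_{\tilde\phi}(0,w)\approx\|w\|$ (as $\tilde\phi$ is intrinsic Lipschitz with $\tilde\phi(0)=0$). Because $\phi(w^*)=\tilde\phi(w^*)$, the triangle inequality gives
\[
d(\phi(w),\tilde\phi(w))\le d(\phi(w),\phi(w^*))+d(\tilde\phi(w^*),\tilde\phi(w)).
\]
The second summand is $o(\|w\|)$ since $\tilde\phi$ is Lipschitz and $\rho_{\tilde\phi}(w,w^*)=o(\|w\|)$; the first is controlled by the uniform bound above applied \emph{at the nearby point} $w^*\in F$, namely $d(\phi(w),\phi(w^*))\le 2m\,\rho_\phi(w,w^*)$, and one checks $\rho_\phi(w,w^*)=o(\|w\|)$ by estimating the $\W$-projections through the group operation, using that the conjugating factors $\phi(w),\phi(w^*)$ have norm $\lesssim\|w\|$ (the pointwise bound at $0$ and Lipschitzianity of $\tilde\phi$). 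This gives $d(\phi(w),\ell(w))=o(\|w\|)$, i.e. intrinsic differentiability of $\phi$ at $\bar w$, and the union over $i,m$ concludes.

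I expect the main obstacle to be precisely this transfer, and within it the discrepancy estimate: a naive pointwise comparison of $\phi$ and $\tilde\phi$ off $F$ fails (it can be of order $\|w\|$ at isolated points), and the decisive idea is to invoke the intrinsic Lipschitz bound at the companion point $w^*\in F$, where the \emph{uniform} constant $m$ is available, rather than at $\bar w$, so that density of $F$ makes the error $o(\|w\|)$. A second, technical difficulty is that the graph quasi-distance $\rho_{\tilde\phi}$ is not comparable to the distance of $\W$—conjugation by the $\V$-components distorts the $\W$-projection at scale $\sqrt{\|w\|}$—so the density argument and the passages between $\rho_\phi,\rho_{\tilde\phi}$ and $\|\cdot\|$ must be carried out on the graph; this is exactly the reason why it is advantageous to reason with the intrinsic graph rather than with $\phi$ itself.
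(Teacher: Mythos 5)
Your proposal is correct, and up to the final transfer step it coincides with the paper's proof: decompose $S_\phi$ into countably many pieces carrying a uniform cone condition, extend each piece by Theorem \ref{teo_RadVit}, work at a point that is simultaneously a differentiability point of $\tilde\phi$ and a density point of the graph piece, translate to the origin, and use density to produce companion points $w^*\in F$. (Two small items you gloss over and should state: the measurability of the sets $F_{i,m}$, which the density theorem requires, and the measure comparison $\Phi_{\#}(\leb^{2n+1-k}\res\W)\approx \s^{Q-k}\res\gr_{\tilde\phi}$ of \cite[Remark 4.6]{vittone2021}, which is what converts density a.e.\ on the graph into $\leb$-a.e.\ on $\W$.) Where you genuinely differ is the conclusion. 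The paper goes through the cone characterization of intrinsic differentiability (Theorem \ref{teo_fsscequivcar}) and argues by contradiction: a failure of differentiability at $0$ yields a sequence $x_h\to 0$ with $x_h\cdot\phi(x_h)$ inside a cone $C_{\T,\V}(0,\alpha)$ around the tangent subgroup of $\tilde\phi$ while $x_h\cdot\tilde\phi(x_h)$ lies outside $C_{\T,\V}(0,2\alpha)$; this geometric separation forces the lower bound $d(\phi(x_h),\tilde\phi(x_h))\geq K\|x_h\|$ (the paper's Steps 4--6, which need the auxiliary projections onto the splitting $\T\cdot\V$), and the density argument (Steps 7--9) then contradicts it. You instead prove directly that $d(\phi(w),\tilde\phi(w))=o(\|w\|)$ for \emph{all} $w\to 0$ and conclude by the triangle inequality that $\phi$ inherits the differential $d\tilde\phi_0$; this eliminates Theorem \ref{teo_fsscequivcar}, the tangent subgroup $\T$ and the whole contradiction scaffolding, while resting on exactly the same density mechanism, so it is a genuine streamlining. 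One simplification of your ``technical difficulty'' about $\rho_\phi$ versus $\rho_{\tilde\phi}$: the cone condition at $w^*$ only needs the one-sided quantity $\|(q^{-1}\cdot p)_\W\|$, where $q=w^*\cdot\phi(w^*)$ and $p=w\cdot\phi(w)$, and since $\phi(w^*)=\tilde\phi(w^*)$ one has
\[
(q^{-1}\cdot p)_\W=\phi(w^*)^{-1}\cdot\bigl((w^*)^{-1}\cdot w\bigr)\cdot\phi(w^*)=(q^{-1}\cdot\widetilde{p})_\W,
\qquad \widetilde{p}\ceq w\cdot\tilde\phi(w),
\]
so this term is bounded by $2\rho_{\tilde\phi}(w,w^*)=o(\|w\|)$ with no conjugation estimate on $\phi(w)$, no pointwise bound $\|\phi(w)\|\leq m\|w\|$, and no detour through $\rho_\phi$ (this identity is precisely what the paper exploits in its Step 9); your route through the $\sqrt{\cdot}$-distortion of conjugation also works, but is roundabout. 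Finally, your fixed-ball decomposition is a real convenience compared with the paper's: since the cone condition defining $F_{i,m}$ is taken against the graph over the \emph{fixed} ball $B_i$ rather than over the point-centered balls $B_\W(w,1/j)$ defining the paper's $C_j$, the uniformity needed when the companion point varies --- which costs the paper the small-diameter splitting into $C_{j,i}$ and the verification of \eqref{eq_tranfun} --- becomes automatic after translating the graph.
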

\begin{proof}
For convenience we split the proof into several steps.\\

\emph{Step 1: Split $S_\phi$ into countably many sets where $\phi$ is intrinsic Lipschitz.}\\
For $j \in \N$ we define
\begin{equation}\label{defofCj}
C_j \ceq \left\lbrace w \in A: C_{1/j}(w \cdot \phi(w)) \cap \gr_{\phi|_{B_\W(w,1/j)}}=\lbrace w \cdot \phi(w)\rbrace \right\rbrace.
\end{equation}
Then each $C_j$ is measurable and it is clear that $S_\phi=\bigcup_{j \in \N}C_j$. Then we express each $C_j$ as the union of measurable sets $C_{j,1},C_{j,2},\dots$ such that $\diam(C_{j,i})<\tfrac 1j$ for every $i,j \in \N$. We can do that for example intersecting $C_j$ with countably many balls of diameter smaller than $\tfrac 1j$. Then we have that $S_\phi=\bigcup_{j,i \in \N}C_{j,i}$ and $\phi|_{C_{j,i}}$ is intrinsic Lipschitz.\\

\emph{Step 2: Use Theorem \ref{teo_RadVit} to extend each $\phi|_{C_{j,i}}$.}\\
By Theorem \ref{teo_RadVit} for every $j,i \in \N$ there exists an intrinsic Lipschitz map $\tilde{\phi}:\W \to \V$ such that $\tilde{\phi}|_{C_{j,i}}\equiv \phi|_{C_{j,i}}$ and $\tilde{\phi}$ is intrinsically differentiable almost everywhere on $\W$. Fix  $\bar w \in C_{j,i}$ such that $\bar w$ is a point of intrinsic differentiability for $\tilde{\phi}$ and $\bar w \cdot \phi(\bar w)$ is a density point of $\gr_{\phi|C_{j,i}}$ with respect to $\s^{Q-k}\res \gr_{\tilde{\phi}}$ (recall that from Theorem \ref{teo_grahlreg} $\s^{Q-k}\res \gr_{\tilde{\phi}}$ is a $(Q-k)$-Ahlfors regular measure on $\gr_{\tilde \phi}$, so that Lebesgue density theorem holds). By \cite[Remark 4.6]{vittone2021}, there exists a constant $\overline C>0$ such that, denoting by $\Phi$ the graph map $\Phi(w):=w\cdot \tilde\phi(w)$,
\[\overline C^{-1}\s^{Q-k}\res\gr_{\tilde \phi}\leq \Phi_{\#}(\leb^{2n+1-k}\res\W)   \leq \overline C\s^{Q-k}\res \gr_{\tilde \phi}.\]
This implies that the set of points $\bar w$ with the previous properties is a full-measure set in $ C_{j,i}$.\\
If we prove that $\bar w$ is also a point of intrinsic differentiability for $\phi$ then we are done.\\

\emph{Step 3: Without loss of generality, one  can assume  $\bar w=0$ and $\phi(\bar w)=0$.}\\
Assuming that $\bar w=0$ and $\phi(\bar w)=0$ is equivalent to replace the function $\phi$ with the translated function $\phi_{\bar w}$ (see \eqref{Wnorm}) since $\phi_{\bar w}(0)=0$. Notice that, by definition, $\phi$ is intrinsically differentiable at $\bar w$ if and only if $\phi_{\bar w}$ is intrinsically differentiable at $0$. Hence, it suffices to show that all the properties we will use on the map $\phi$ are true also for $\phi_{\bar w}$. Again, the differentiability of 
$\tilde \phi$ is preserved by translation of the graph. The same holds for the intrinsic Lipschitz property of $\tilde \phi$. Moreover $\bar w\cdot \phi(\bar w)$ is a density point of $\gr_{\phi|C_{j,i}}$ if and only if $0$ is a density point of $(\bar w\cdot \phi(\bar w))^{-1}\cdot\gr_{\phi|C_{j,i}}$ (by invariance of the distance and the measure). The last condition to be verified is to show that there exists $\delta>0$ such that for all $w \in C_{j,i}$ one has
\begin{equation}\label{eq_tranfun}
B_\W(\phi(\bar w)^{-1} \cdot \bar w^{-1}\cdot w \cdot \phi (\bar w),\delta) \se \phi(\bar w)^{-1} \cdot \bar w^{-1} \cdot B_\W(w,\tfrac 1j) \cdot \phi(\bar w).
\end{equation}
In fact, if \eqref{eq_tranfun} holds, then we obtain \emph{Step 3} upon noticing that the set corresponding to $B_\W(w,\frac{1}{j})$ after the translation of the graph of $\phi$ is exactly $\phi(\bar w)^{-1} \cdot \bar w^{-1} \cdot B_\W(w,\tfrac 1j) \cdot \phi(\bar w)$. Hence, property \eqref{defofCj} remains true for the function $\phi_{\bar w}$ replacing $\frac{1}{j}$ with $\delta$.
\\We are left to prove \eqref{eq_tranfun}.  Since 
\[
\lim_{\W \ni a \to 0}\|\phi(\bar w) \cdot a \cdot \phi(\bar w)^{-1}\|=0,
\]
there exists $\delta>0$ such that, if $\|a\|<\delta$, then 
\begin{equation}\label{eq_aa1}
\|\phi(\bar w) \cdot a \cdot \phi(\bar w)^{-1}\|<\tfrac 1j.
\end{equation}
We define $\tilde{w} \ceq \phi(\bar w)^{-1}\cdot \bar w^{-1}\cdot  w \cdot \phi(\bar w)$ and we claim that 
\begin{equation}\label{eq_aa2}
\phi(\bar w) \cdot B_\W(\tilde{w},\delta) \cdot \phi(\bar w)^{-1}\se B_\W (\phi(\bar w) \cdot \tilde{w}\cdot \phi(\bar w)^{-1},\tfrac 1j).
\end{equation}
If \eqref{eq_aa2} holds, then
\begin{align*}
B_W(\tilde{w},\delta) &\se \phi(\bar w)^{-1}\cdot B_\W(\phi(\bar w) \cdot \tilde{w} \cdot \phi(\bar w)^{-1},\tfrac 1j) \cdot \phi(\bar w)\\
&=\phi(\bar w)^{-1}B_\W( \bar w^{-1}\cdot w, \tfrac 1j) \cdot \phi(\bar w)\\
&=\phi(\bar w)^{-1}\cdot \bar w^{-1}\cdot B_\W (w,\tfrac 1j) \cdot \phi(\bar w).
\end{align*}
proving \eqref{eq_tranfun}. We are left to prove \eqref{eq_aa2}. Let $y \in \phi(\bar w) \cdot B_\W (\tilde{w},\delta) \cdot \phi(\bar w)^{-1}$. Then $y=\phi(\bar w) \cdot x \cdot \phi(\bar w)^{-1}$ for some $x \in B_\W(\tilde{w},\delta)$. Since $d(x,\tilde{w})<\delta$, by \eqref{eq_aa1} we have $\|\phi(\bar w) \cdot \tilde{w}^{-1}\cdot x \cdot \phi(\bar w)^{-1}\|<\tfrac 1j$. The latter implies that
\[
d(y,\phi(\bar{w}) \cdot \tilde{w} \cdot \phi(\bar w)^{-1})=\| \phi(\bar w) \cdot \tilde{w}^{-1}\cdot \phi(\bar w)^{-1}\cdot y \|=\|\phi(\bar w)\cdot \tilde{w}^{-1} \cdot x \cdot \phi(\bar w)^{-1}\|<\tfrac 1j,
\]
finally proving \eqref{eq_aa2}.\\

\emph{Step 4: Use the equivalent characterization from Theorem \ref{teo_fsscequivcar}}.\\
Since $\tilde{\phi}$ is intrinsically differentiable at $0$, by Theorem \ref{teo_fsscequivcar}, there exists a vertical subgroup $\T_{\tilde{\phi},0}$ such that for every $\alpha>0$ there exists $\tilde r=\tilde r(\tilde{\phi},0,\alpha)>0$ such that
\begin{equation}\label{eq_diffphitil}
C_{\T_{\tilde{\phi},0},\V}(0,\alpha) \cap \gr_{\tilde{\phi}|_{B_\W(0,\tilde r)}}=\lbrace 0 \rbrace. 
\end{equation}
Using again Theorem \ref{teo_fsscequivcar}, if we show that for every $\alpha>0$ there exists $\bar r=\bar r(\phi,0,\alpha)>0$ such that 
\[
C_{\T_{\tilde{\phi},0},\V}(0,\alpha) \cap \gr_{\phi|_{B_\W(0,\bar r)}}=\lbrace 0 \rbrace,
\]
then we get that $\phi$ is intrinsically differentiable at $0$. Assume not: then there exists a certain $\alpha>0$ such that for every $\bar r>0$ one has
\begin{equation}\label{eq_contr}
C_{\T,\V}(0,\alpha) \cap \gr_{\phi|_{B_\W(0,\bar r)}} \neq \lbrace 0 \rbrace.
\end{equation}
where, for the sake of brevity, we write $\T \ceq \T_{\tilde{\phi},0}$. From \eqref{eq_contr} we obtain that there is a sequence of points $(x_h)_{h \in \N} \se \W$ such that $x_h \xrightarrow{h \to \infty}0$ and 
\begin{equation}\label{eq_cond1}
p_h \ceq x_h  \cdot \phi(x_h) \in C_{\T,\V}(0,\alpha).
\end{equation} 
On the other hand $\tilde{\phi}$ is intrinsically differentiable at $0$ so, by \eqref{eq_diffphitil}, for $h$ sufficiently large 
\begin{equation}\label{eq_cond2}
\widetilde{p_h}\ceq x_h \cdot \tilde{\phi}(x_h) \not \in C_{\T,\V}(0,2\alpha).
\end{equation}
\phantom{}

\emph{Step 5: Prove that $(p_h)_\T=(\widetilde{p_h})_\T$.}\\
Here and in the following of the proof we will use the following notations: in order to indicate the components of a point $q \in \H^n$ with respect to the splitting $\H^n=\W \cdot \V$ we will use $q=q_\W \cdot  q_\V$; in order to indicate the components of a point $q \in \H^n$ with respect to the splitting $\H^n=\T \cdot \V$ we will use $q=q_\T \cdot q_\V^\T$. Notice that, in general, $q_\V \neq q_\V^\T$.
We observe that 
\[
(p_h)_\W \cdot (p_h)_\V=p_h=(p_h)_\T \cdot (p_h)_\V^\T=((p_h)_\T)_\W \cdot \underbrace{((p_h)_\T)_\V \cdot (p_h)_\V^\T}_{\in \V}.
\]
By the uniqueness of the components (see Remark \ref{rem_componenti}) we conclude that  $(p_h)_\W=((p_h)_\T)_\W$ and so $x_h=((p_h)_\T)_\W$. In the same fashion we obtain that $x_h=((\widetilde{p_h})_\T)_\W$ and so $((p_h)_\T)_\W=((\widetilde{p_h})_\T)_\W$. Now we observe that 
\[
\T \ni ((p_h)_\T)^{-1}\cdot (\widetilde{p_h})_\T=(((p_h)_\T)_\V)^{-1} \cdot  \underbrace{(((p_h)_\T)_\W)^{-1} \cdot ((\widetilde{p_h})_\T)_\W}_{=0} \cdot ((\widetilde{p_h})_\T)_\V \in \V
\]
but $\T$ and $\V$ are complementary so $\T \cap \V=\lbrace 0 \rbrace$ implying that $(p_h)_\T=(\widetilde{p_h})_\T$.\\

\emph{Step 6: Prove that $d(\phi(x_h),\tilde{\phi}(x_h)) \geq K\|x_h\|$ for some $K>0$.}\\
We observe preliminarily that, by the fact that $((p_h)_\T)_\W=x_h$, 
\begin{equation}\label{eq_aux1}
(p_h)_\V^\T=((p_h)_\T)^{-1} \cdot p_h=(((p_h)_\T)_\V)^{-1}\cdot x_h^{-1}\cdot x_h \cdot \phi(x_h)=(((p_h)_\T)_\V)^{-1}\cdot \phi(x_h)
\end{equation}
and, in the same fashion, since $(p_h)_\T=(\widetilde p_h)_\T$,
\begin{equation}\label{eq_aux2}
(\widetilde{p_h})_\V^\T=(((\widetilde p_h)_\T)_\V)^{-1}\cdot \tilde{\phi}(x_h) = (((p_h)_\T)_\V)^{-1}\cdot \tilde{\phi}(x_h) .
\end{equation}
From \eqref{eq_cond1} and \eqref{eq_cond2} we obtain, for $h$ sufficiently large,
\begin{equation}\label{eq_cond3}
\begin{cases}
\| (p_h)_\T\| \leq \alpha \|(p_h)_\V^\T\|,\\
\| (\widetilde{p_h})_\T\| >2\alpha \|(\widetilde{p_h})_\V^\T\|.
\end{cases}
\end{equation}
By the left-invariance of the distance we have 
\begin{align*}
d(\phi(x_h),\tilde{\phi}(x_h))&=d\left((((p_h)_\T)_\V)^{-1} \cdot \phi(x_h),((({p_h})_\T)_\V)^{-1} \cdot \tilde{\phi}(x_h)\right)\\
&=d\left((p_h)_\V^\T,(\widetilde{p_h})_\V^\T\right)\\
&=\|((\widetilde{p_h})_\V^\T)^{-1}\cdot (p_h)_\V^\T \|\\
&\geq \|(p_h)_\V^\T\|- \|(\widetilde{p_h})_\V^\T\|\\
&\geq \frac{1}{2\alpha}\|(p_h)_\T\|.
\end{align*} 
where we used, in order from the first to the last line, the left-invariance of the distance, \eqref{eq_aux1} and \eqref{eq_aux2}, the definition of norm associated to the distance, the reverse triangular inequality and, finally, \eqref{eq_cond3}. From Proposition \ref{prop_proj}, we obtain that 
\begin{equation}\label{eq_auxfin}
\|(p_h)_\T\|=\| x_h \cdot ((p_h)_\T)_\V\| \geq \widetilde{C}\|x_h\|
\end{equation}
where $\tilde{C}$ is a constant only depending on $\W$ and $\V$. The latter proves that $d(\phi(x_h),\tilde{\phi}(x_h)) \geq K\|x_h\|$ for some $K>0$.\\

\emph{Step 7: Construction of an auxiliary sequence $(q_h)_{h \in \N}$ with certain properties.}\\
Since we are assuming that $0$ is a density point of $\gr_{\phi|_{C_{j,i}}}$ in $\gr_{\tilde{\phi}}$, by \cite[pag. 409]{brz2004} there exists a sequence $(q_h)_{h \in \N} \se \gr_{\phi|_{C_{j,i}}}$ such that
\begin{equation*}
d(q_h,\widetilde{p_h})=o(d(0,\widetilde{p_h}))=o(\|\widetilde{p_h}\|).
\end{equation*}
In other words, for every $\ve>0$ there exists $\bar h \in \N$ such that for every $h> \bar h$ 
\begin{equation}\label{eq_aux11}
d(q_h,\widetilde{p_h}) \leq \ve \|\widetilde{p_h}\|.
\end{equation}
We observe that 
\begin{equation}\label{eq_aux12}
\|\widetilde{p_h}\|=\|x_h \cdot \tilde{\phi}(x_h)\| \leq \|x_h\|+\|\tilde{\phi}(x_h)\|
\end{equation}
and since $\tilde{\phi}$ is intrinsically Lipschitz and $\tilde{\phi}(0)=\phi(0)=0$ we obtain $\| \tilde{\phi}(x_h)\| \leq L\|x_h\|$ for some constant $L>0$. The latter together with \eqref{eq_aux11} and \eqref{eq_aux12} implies that for every $\ve>0$ there exists  $\bar h_1 \in \N$ such that for every $h> \bar h_1$ 
\begin{equation}\label{eq_aux13}
d(q_h,\widetilde{p_h})\leq \ve \|x_h\|.
\end{equation}
Moreover, for every $h \in \N$ there exists  $y_h \in C_{j,i}$ such that 
\[
q_h=y_h \cdot \phi(y_h)=y_h \cdot \tilde{\phi}(y_h),
\]
so that we can rewrite \eqref{eq_aux13} as
\begin{equation}\label{eq_aux14}
d(y_h \cdot \tilde{\phi}(y_h),x_h \cdot \tilde{\phi}(x_h)) \leq \ve \|x_h\|.
\end{equation}
Since $\W$ is normal, the projection on $\V$ is Lipschitz continuous (again Proposition \ref{prop_proj}) and there exists a constant $D>0$ such that
\[
d(\tilde{\phi}(y_h),\tilde{\phi}(x_h))\leq D d(y_h \cdot \tilde{\phi}(y_h),x_h \cdot \tilde{\phi}(x_h)).
\]
The latter together with \eqref{eq_aux14} implies that for every $\ve>0$ there exists  $\bar h_2 \in \N$ such that for every $h> \bar h_2$ 
\begin{equation}\label{eq_aux15}
d(\tilde{\phi}(y_h),\tilde{\phi}(x_h)) \leq \ve \|x_h\|.
\end{equation} 
Moreover, since $y_h \in C_{j,i}\se C_j$ it follows from \eqref{defofCj} and \eqref{eq_tranfun} that 
\begin{equation}\label{eq_aux21}
C_{1/j}(q_h) \cap \gr_{\phi|_{B_\W(y_h,\delta)}}=\lbrace q_h \rbrace,
\end{equation}
where $\delta>0$ is found as in \eqref{eq_tranfun}. 
\\

\emph{Step 8: Prove that $d(x_h,y_h)\xrightarrow{h \to +\infty}0$.}\\
By \eqref{eq_aux13} and the fact that $x_h\to 0$, we know that $d(q_h,\tilde{p_h}) \xrightarrow{h \to +\infty}0$. If $\widetilde C$ is as in \eqref{eq_auxfin}, by Proposition \ref{prop_proj} we have 
\[
\widetilde C\|\tilde{\phi}(x_h)^{-1}\cdot x_h^{-1} \cdot y_h \cdot \tilde{\phi}(x_h) \|=\widetilde C\| (\widetilde{p_h}^{-1}\cdot q_h)_\W \| \leq \|\widetilde{p_h}^{-1}\cdot q_h\|=d(q_h,\widetilde{p_h})
\]
which implies
\begin{equation}\label{eq_aux41}
\|\tilde{\phi}(x_h)^{-1}\cdot x_h^{-1} \cdot y_h \cdot \tilde{\phi}(x_h) \| \xrightarrow{h \to +\infty}0.
\end{equation}
Recall that $\tilde{\phi}$ is intrinsic Lipschitz, therefore continuous, so $x_h \xrightarrow{h \to +\infty}0$ implies $\tilde\phi(x_h)\to\tilde \phi(0)=\phi(0)=0$. We conclude that $d(x_h,y_h) \xrightarrow{h \to +\infty}0$  upon observing that from \eqref{eq_aux41} we get
\[
d(x_h,y_h)=\|x_h^{-1}\cdot y_h\|=\|\tilde{\phi}(x_h) \cdot \tilde{\phi}(x_h)^{-1}\cdot x_h^{-1} \cdot y_h \cdot \tilde{\phi}(x_h) \cdot \tilde{\phi}(x_h)^{-1}\| \xrightarrow{h \to +\infty}0.
\] 
The latter implies that, for $h$ sufficiently large, $x_h \in B_\W(y_h,\delta)$.\\

\emph{Step 9: Conclude the proof obtaining a contradiction.}\\
Because of \emph{Step 8} and \eqref{eq_aux21}, we have for $h$ large enough
\begin{equation}\label{eq_aux31}
p_h=x_h \cdot \phi(x_h) \not \in C_{1/j}(q_h)=q_h \cdot C_{1/j}(0).
\end{equation}
The latter is true unless $q_h=p_h$, but in that case we would get $\phi(x_h)=\tilde{\phi}(x_h)$ and so we would obtain a contradiction with \textit{Step 6}. From \eqref{eq_aux31} we obtain
\begin{equation}\label{eq_aux32}
q_{h}^{-1}\cdot p_h \not \in C_{1/j}(0) \Rightarrow \| (q_{h}^{-1}\cdot p_h)_\W \|>\frac{1}{j} \| (q_{h}^{-1}\cdot p_h)_\V \|.
\end{equation}
From an explicit computation of the projections (see Proposition \ref{prop_proj}), 
we get
\begin{equation}
\begin{cases}\label{eq_aux33}
(q_{h}^{-1}\cdot p_h)_\V = \tilde{\phi}(y_h)^{-1}\cdot \phi(x_h),\\
(q_{h}^{-1}\cdot p_h)_\W=\tilde{\phi}(y_h)^{-1}\cdot y_h^{-1}\cdot x_h \cdot \tilde{\phi}(y_h).
\end{cases}
\end{equation}
From \eqref{eq_aux32} and \eqref{eq_aux33} we obtain
\[
\| \tilde{\phi}(y_h)^{-1} \cdot \phi(x_h)\|<j\| \tilde{\phi}(y_{h})^{-1}\cdot y_h^{-1}\cdot x_h \cdot \tilde{\phi}(y_h)\| 
\leq 2j \rho_{\tilde{\phi}}(x_h,y_h)
\]
and, recalling Proposition \ref{pro_quasidist}, we obtain
\begin{equation}\label{eq_aux34}
\| \tilde{\phi}(y_h)^{-1}\cdot \phi(x_h)\| \leq 2jC d(\widetilde{p_h},q_h)
\end{equation} 
for some constant $C>0$. Finally
\begin{align*}
\| \tilde{ \phi}(x_h)^{-1}\cdot \phi(x_h)\| &\leq \|\tilde{\phi}(x_h)^{-1}\cdot \tilde{\phi}(y_h)\|+\|\tilde{\phi}(y_h)^{-1}\cdot \phi(x_h) \| \\
&\leq \ve \|x_h\|+2jCd(\tilde{p_h},q_h)\\
&\leq \ve \|x_h\|+2jC \ve \|x_h\|\\
&=\underbrace{(1+2jC)}_{M}\ve\|x_h\|,
\end{align*}
where in the first three lines we used the triangle inequality, \eqref{eq_aux15} and \eqref{eq_aux34}, and, finally, \eqref{eq_aux13}. Combining the latter with \emph{Step 6} (that is $d(\phi(x_h),\tilde{\phi}(x_h)) \geq K\|x_h\|$) we get
\begin{equation}\label{eq_aux35}
\| \tilde{ \phi}(x_h)^{-1}\cdot \phi(x_h)\|  \leq M \ve \frac{d(\phi(x_h),\tilde{\phi}(x_h))}{K}= M \ve \frac{\|\tilde{\phi}(x_h)^{-1} \cdot\phi(x_h)\|}{K}
\end{equation}
where $K$ is the same constant coming from \emph{Step 6}. Simplifying $ \| \tilde{ \phi}(x_h)^{-1}\cdot \phi(x_h)\| $ from both sides of \eqref{eq_aux35} we obtain
\[
1 \leq \frac{M\ve}{K}
\]
but then we get a contradiction from the arbitrariness of $\ve$, concluding the proof.
\end{proof}
\begin{rem}
We emphasize that the proof of Theorem \ref{maindiff} is not dependent on the particular structure of $\he^n$. It can be extended to general Carnot groups $\G = \W \V$, where $\W$ is a normal subgroup and $\V$ is horizontal, provided that a Rademacher-type theorem holds. This is particularly relevant for graphs of codimension 1 either in step 2 Carnot groups or, more generally, in Carnot groups of type $^\star$. However, it is important to note that the validity of a Rademacher-type theorem for intrinsic graphs of arbitrary codimension has only been established in the Heisenberg case. For this reason, we have chosen to set this paper within the context of $\H^n$.

\end{rem}
\section{Alternative proof of Stepanov Theorem for graphs of codimension 1}
In this subsection we study the case of \textit{1-codimensional} graphs in $\h^n=\W\cdot\V$, which means that we restrict to the case dim$(\V)=1$. Under this assumption, $\V=\{\exp (tV):t\in\R\}$ for a fixed $V\in \mathfrak h_1$ (the first layer of the stratification of the Lie algebra of $\h^n$) and we can naturally identify $\V\equiv \R$ with its usual order relation. In particular we can define \textit{infimum} and \textit{supremum} of functions: if $\phi_\alpha:\W\to\V$ are such that $\phi_\alpha(w)=\exp(g_\alpha(w)V)$, for some $g_\alpha:\W\to\R$, we define
\[\inf_{\alpha\in I}\phi_\alpha(w)\ceq \exp\left(\inf_{\alpha\in I}g_\alpha(w)V\right).\]
In the same way we can define the supremum of a family of maps from $\W$ to $\V$.\\
Infimum and supremum of intrinsic Lipschitz maps are themselves intrinsic Lipschitz maps:
\begin{lemma}[{\cite[Proposition 4.24]{fssc2010}}]\label{infLip}
 Let $\W,\V$ be complementary subgroups of $\H^n$ where $\V$ is a horizontal subgroup of dimension $1$.  Then, for all $L>0$ there exists $\widetilde L\geq L$ with the following property: if $\{\phi_\alpha:\W\to\V\}$ is a family of intrinsic $L$-Lipschitz maps, then the function $\phi:=\inf_\alpha \phi_\alpha$ is either well defined and intrinsic $\widetilde L$-Lipschitz, or $\phi\equiv -\infty$. The same property holds for the supremum.
\end{lemma}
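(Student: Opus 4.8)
The plan is to convert the cone definition of intrinsic Lipschitzianity into a pointwise inequality for the scalar functions $g_\alpha$, and then to imitate the classical argument that an infimum of Lipschitz functions is Lipschitz, while keeping track of the two features that make the intrinsic setting genuinely different: the relevant ``distance'' depends on the value of the function, and the cone condition is not symmetric in its two arguments. Write $\phi_\alpha(w)=\exp(g_\alpha(w)V)$, set $g\ceq\inf_\alpha g_\alpha$ and $c_V\ceq\|\exp(V)\|$, so that $\|\exp(sV)\|=|s|\,c_V$ by homogeneity and rotational invariance. For $w_1,w_2\in\W$ and $s\in\R$ I would introduce
\[
D_s(w_1,w_2)\ceq\big\|\exp(-sV)\cdot(w_1^{-1}\cdot w_2)\cdot\exp(sV)\big\|.
\]
Since $\V$ is abelian and $\W$ is normal, writing $p_i\ceq w_i\cdot\exp(g_iV)$ one gets $(p_1^{-1}\cdot p_2)_\V=\exp((g_2-g_1)V)$ and, crucially, $(p_1^{-1}\cdot p_2)_\W=\exp(-g_1V)\cdot(w_1^{-1}\cdot w_2)\cdot\exp(g_1V)$, which depends on $g_1$ but not on $g_2$. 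Thus Definition \ref{def_lipschitz} shows that $\psi=\exp(hV)$ is intrinsic $L$-Lipschitz if and only if
\begin{equation}\label{plan_star}
c_V\,|h(w_2)-h(w_1)|\le L\,D_{h(w_1)}(w_1,w_2)\qquad\text{for all }w_1\ne w_2,
\end{equation}
where the defining strict inequality and \eqref{plan_star} are equivalent up to enlarging $L$.

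Next I would record how $D_s$ depends on $s$. Conjugation by $\exp(sV)$ leaves the horizontal part of $w_1^{-1}\cdot w_2$ unchanged and shifts its vertical part by a quantity linear in $s$ and bounded by $C|s|$ times that horizontal part; since every homogeneous norm is comparable to the modulus of the horizontal part plus the square root of the modulus of the vertical one (Proposition \ref{pro_distequiv}), this yields, for all $s,s'$,
\[
D_{s'}(w_1,w_2)\le D_s(w_1,w_2)+C\sqrt{|s-s'|\,D_s(w_1,w_2)}.
\]
In particular $s\mapsto D_s(w_1,w_2)$ is continuous and $D_s(w_1,w_2)=O(\sqrt{|s|})$ as $|s|\to\infty$. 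I would then prove the bound ``from the lower point''. Fix $w_1\ne w_2$ with $s_i\ceq g(w_i)$ finite and, say, $s_1\le s_2$. For $\varepsilon>0$ pick $\alpha$ with $g_\alpha(w_1)<s_1+\varepsilon$; since $g_\alpha(w_2)\ge s_2$, applying \eqref{plan_star} to $g_\alpha$ with base point $w_1$ gives $c_V(s_2-s_1)\le L\,D_{g_\alpha(w_1)}(w_1,w_2)+c_V\varepsilon$. Letting $\varepsilon\to0$ and using the continuity above, I obtain $c_V|s_2-s_1|\le L\,D_{s_1}(w_1,w_2)$, i.e.\ \eqref{plan_star} with the \emph{original} constant $L$, but only from the base point where $g$ is smaller.

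The core of the argument is to symmetrise this, at the cost of enlarging the constant. From the previous step $|s_2-s_1|\le\tfrac{L}{c_V}D_{s_1}$; inserting this into the comparison estimate (expanded around $s_2$) gives $D_{s_1}\le C_1 D_{s_2}$, where $C_1=C_1(L,c_V,\W,\V)\ge1$ is obtained by solving the elementary inequality $a\le b+C\sqrt{\kappa ab}$. Hence $c_V|s_2-s_1|\le L\,D_{s_1}\le\widetilde L\,D_{s_2}$ and also $\le\widetilde L\,D_{s_1}$ with $\widetilde L\ceq LC_1$, so \eqref{plan_star} holds for $\phi$ with constant $\widetilde L$ from both base points, that is for every ordered pair; after possibly doubling $\widetilde L$ to restore strictness this means $\phi$ is intrinsic $\widetilde L$-Lipschitz. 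For the dichotomy, if $g(w_0)=-\infty$ I would choose $\alpha_n$ with $g_{\alpha_n}(w_0)\to-\infty$; then \eqref{plan_star} and $D_s=O(\sqrt{|s|})$ force $g_{\alpha_n}(w)\le g_{\alpha_n}(w_0)+\tfrac{L}{c_V}D_{g_{\alpha_n}(w_0)}(w_0,w)\to-\infty$ for every $w$, so $g\equiv-\infty$; thus $g$ is either finite everywhere, and then intrinsic $\widetilde L$-Lipschitz, or identically $-\infty$. The supremum case follows verbatim upon identifying $\V\equiv\R$ via $-V$ rather than $V$, which reverses the order (turning $\sup$ into $\inf$) while leaving the cone condition, and hence the notion of intrinsic $L$-Lipschitz map, unchanged.

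The main obstacle is exactly this symmetrisation step. In the metric theory the infimum argument is immediate because the ambient distance is symmetric and independent of the competing functions, whereas here the right-hand side $D_{h(w_1)}$ of \eqref{plan_star} both depends on the value $h(w_1)$ and privileges the base point $w_1$. Overcoming this requires the continuity of $s\mapsto D_s$ (to absorb the $\varepsilon$-error coming from the infimum) together with the quantitative comparison $D_{s_1}\le C_1 D_{s_2}$, and it is precisely this comparison that is responsible for the deterioration of the constant from $L$ to $\widetilde L\ge L$ in the statement.
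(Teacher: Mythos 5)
Your proof is correct, and it is worth noting that the paper itself contains no proof of Lemma \ref{infLip}: the result is imported verbatim from \cite[Proposition 4.24]{fssc2010}, so the only meaningful comparison is with that reference. Your route is self-contained and genuinely different from the cited one. The scalar reformulation is the right one: since $\W$ is normal, for $p_i=w_i\cdot\exp(h(w_i)V)$ one indeed has $(p_1^{-1}\cdot p_2)_\V=\exp((h(w_2)-h(w_1))V)$ and $(p_1^{-1}\cdot p_2)_\W=\exp(-h(w_1)V)\cdot(w_1^{-1}\cdot w_2)\cdot\exp(h(w_1)V)$, so intrinsic $L$-Lipschitz continuity is exactly your inequality $c_V|h(w_2)-h(w_1)|\le L\,D_{h(w_1)}(w_1,w_2)$ over ordered pairs (strict versus non-strict being immaterial up to enlarging constants, since $D_s(w_1,w_2)>0$ for $w_1\ne w_2$). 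Your key conjugation estimate $D_{s'}\le D_s+C\sqrt{|s-s'|\,D_s}$ is also correct: by Baker--Campbell--Hausdorff in a step-$2$ group, $\exp(-sV)\exp(U)\exp(sV)=\exp(U+s[U,V])$, i.e.\ conjugation produces a purely central shift of size $\lesssim |s|$ times the horizontal part of $U$, which in turn is $\lesssim D_s$; comparability of homogeneous norms (Proposition \ref{pro_distequiv}) then yields the square-root term. Given this, the one-sided bound from the smaller value (with the $\varepsilon$ from the infimum absorbed by continuity of $s\mapsto D_s$), the symmetrisation via the elementary implication $a\le b+C\sqrt{\kappa ab}\Rightarrow a\le C_1 b$, the dichotomy via $D_s=O(\sqrt{|s|})$, and the reduction of the supremum case to the infimum case by replacing $V$ with $-V$ are all sound, and your diagnosis that the loss $L\mapsto\widetilde L$ is forced precisely by the symmetrisation is accurate. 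By contrast, the argument in \cite{fssc2010} exploits machinery special to codimension one: the halves of intrinsic cones are themselves intrinsic Lipschitz graphs (Lemma 4.20 there, the same fact this paper invokes in the proof of Theorem \ref{codim1}), and the infimum is controlled by comparison with such cone functions. What your argument buys is an explicit, quantitative proof with a transparent origin for both $\widetilde L\ge L$ and the ``either finite everywhere or $\equiv-\infty$'' alternative; what the cone-function argument buys is brevity, since it recycles geometric tools the theory develops anyway.
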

Before presenting the alternative proof, inspired by the proof of J. Mal\'y~\cite{maly1999}, we need an auxiliary lemma.
\begin{lemma}\label{lem_carabinieri}
Let $\W,\V$ be complementary subgroups of $\H^n$, where $\V$ is a horizontal subgroup of dimension $1$. Let $A \subset \W$ be open and  $\bar w \in A$. Suppose $\psi,\phi,\eta: A \to \V$ are such that $\psi \leq \phi \leq \eta$ on $A$ (where we identify $\V \equiv \R$), $\psi(\bar w)=\phi(\bar w)=\eta(\bar w)$ and assume also that the functions $\psi$ and $\eta$ are intrinsically differentiable at $\bar w$. Then $\phi$ is intrinsically differentiable at $\bar w$ and 
\[
d\psi_{\bar w} \equiv d\phi_{\bar w} \equiv d\eta_{\bar w}.
\]
\end{lemma}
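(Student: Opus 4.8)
The plan is to reduce, by a left-translation of the three graphs, to the case $\bar w=0$ and $\psi(\bar w)=\phi(\bar w)=\eta(\bar w)=0$, and then to run a sub-Riemannian version of the classical squeeze argument in the scalar identification $\V\equiv\R$.

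First I would set $\bar p\ceq\bar w\cdot\phi(\bar w)=\bar w\cdot\psi(\bar w)=\bar w\cdot\eta(\bar w)$ (the three coincide since the functions agree at $\bar w$) and replace $\psi,\phi,\eta$ by the translated maps $\psi_{\bar w},\phi_{\bar w},\eta_{\bar w}$, whose graphs are $\bar p^{-1}\cdot\gr_\psi$, and so on. By Definition \ref{intdiff}, $\psi$ (resp. $\eta$) is intrinsically differentiable at $\bar w$ if and only if $\psi_{\bar w}$ (resp. $\eta_{\bar w}$) is intrinsically differentiable at $0$ with the same differential, and the conclusion for the translated maps is equivalent to that for the original ones. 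The point that needs checking is that the order is preserved, i.e. $\psi_{\bar w}\leq\phi_{\bar w}\leq\eta_{\bar w}$. This follows from the projection identities of Proposition \ref{prop_proj}: since $\W$ is normal and $P_\V$ is a homomorphism, for every $w\in A$ and every $v\in\V$ one has $P_\W(\bar p^{-1}\cdot w\cdot v)=P_\W(\bar p^{-1}\cdot w)$, independent of $v$, and $P_\V(\bar p^{-1}\cdot w\cdot v)=P_\V(\bar p^{-1})\cdot v$. Hence the three translated points lying over a common fiber $w$ are sent to points over a common new fiber, while their scalar $\V$-coordinates are all shifted by the same constant $P_\V(\bar p^{-1})$; as $\V\equiv\R$ is abelian this shift preserves the ordering. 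Evaluating at $w=\bar w$ shows all three translated maps vanish at $0$.

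After this reduction I identify $\V\equiv\R$, writing $\psi(w)=\exp(g_\psi(w)V)$ and similarly $g_\phi,g_\eta$; intrinsic differentiability at $0$ then reads $|g_\psi(w)-\ell_\psi(w)|=o(\|w\|)$ (and likewise for $\eta$), where $\ell_\psi,\ell_\eta$ are the scalar intrinsic linear maps associated to $d\psi_0,d\eta_0$. The first substantial step is to prove $\ell_\psi=\ell_\eta$. From $g_\psi\leq g_\eta$ and the two $o(\|w\|)$ estimates I obtain $\ell_\eta(w)-\ell_\psi(w)\geq-o(\|w\|)$; that is, for each $\ve>0$ there is $r>0$ with $(\ell_\eta-\ell_\psi)(w)\geq-\ve\|w\|$ whenever $\|w\|<r$. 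Now $L\ceq\ell_\eta-\ell_\psi$ is again intrinsic linear (in codimension one both maps are of the form $w\mapsto Mw_H$, so their difference is too), hence homogeneous of degree one under the dilations, $L(\delta_\lambda w)=\lambda L(w)$, and odd under inversion, $L(w^{-1})=-L(w)$, while $\|\delta_\lambda w\|=\lambda\|w\|$ and $\|w^{-1}\|=\|w\|$. Applying the inequality to $\delta_\lambda w$ for small $\lambda$ and cancelling $\lambda>0$ removes the smallness restriction, so $L(w)\geq-\ve\|w\|$ for all $w$ and all $\ve$, giving $L\geq0$; applying this to $w^{-1}$ and using oddness gives $L\leq0$. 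Hence $L\equiv0$, i.e. $d\psi_0=d\eta_0\ceq\ell$.

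Finally, with the common intrinsic linear map $\ell$ in hand, the squeeze $g_\psi\leq g_\phi\leq g_\eta$ yields $g_\psi-\ell\leq g_\phi-\ell\leq g_\eta-\ell$, whence $|g_\phi(w)-\ell(w)|\leq\max\{|g_\psi(w)-\ell(w)|,|g_\eta(w)-\ell(w)|\}=o(\|w\|)$. Translating back through $d(\phi(w),\ell(w))\simeq|g_\phi(w)-\ell(w)|$ shows that $\phi$ is intrinsically differentiable at $0$ with $d\phi_0=\ell$, which proves the lemma. I expect the main obstacle to be the reduction step: since the group is non-abelian, one must verify carefully through the projection formulas that the left-translation of the graphs preserves the pointwise ordering. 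Once this is secured, the only genuinely sub-Riemannian ingredient is the use of dilation-homogeneity and inversion-oddness of intrinsic linear maps to upgrade the one-sided $o(\|w\|)$ bound into the exact identity $d\psi_{\bar w}=d\eta_{\bar w}$.
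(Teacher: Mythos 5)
Your proposal is correct and follows essentially the same route as the paper's proof: reduce by left-translation to $\bar w=0$ with common value $0$ (the order is preserved precisely because the three functions agree at $\bar w$, so all three graphs are translated by the same element $\bar p^{-1}$), show $d\psi_{\bar w}\equiv d\eta_{\bar w}$ by playing a one-sided $o(\|w\|)$ bound against dilation-homogeneity and oddness of intrinsic linear (i.e.\ $H$-linear) maps, and conclude with the squeeze. The only difference is organizational: the paper factors the middle step through the auxiliary nonnegative function $\psi_{\bar w}^{-1}\cdot\eta_{\bar w}$ (proving separately that differences of intrinsically differentiable maps are differentiable, and that a nonnegative differentiable map vanishing at $0$ has zero differential), whereas you apply the same mechanism directly to the difference of the scalar linear parts $\ell_\eta-\ell_\psi$, which is slightly more streamlined and also makes explicit the order-preservation check that the paper states more briefly.
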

\begin{proof}
For convenience we split the proof into 4 steps.

\emph{Step 1:} We start by proving that, if a map $\varphi:A\subset \W\to\V$ is intrinsically differentiable at $\bar w=0$, $\varphi(0)=0$ and $\varphi(w)\geq 0$ for every $w\in A$, then $d\varphi_0\equiv 0$. 
Notice that in this case $\varphi_0(w)=\varphi(w)$. By differentiability we know that
\begin{equation}\label{diff}\frac{\|d\varphi_0(w)^{-1}\cdot \varphi(w)\|}{\|w\|}\to 0\quad \text{ if }w\to 0.\end{equation}
Since $\V\equiv\R$ is horizontal, we can rewrite \eqref{diff} as
\[\frac{\varphi(w)-d\varphi_0(w)}{\|w\|}\to 0\quad \text{ if }w\to 0.\]
By definition of limit, for every $\epsilon>0$ there exists $\delta>0$ such that, if $\|w\|<\delta$, then 
\[\frac{\varphi(w)}{\|w\|}<\frac{d\varphi_0(w)}{\|w\|}+\epsilon.\]
Since $\varphi(w)\geq 0$ and by homogeneity of $d\varphi_0$ we get
\[d\varphi_0\left(\delta_{\frac{1}{\|w\|}} w\right)>-\epsilon.\]
Notice that $\delta_{\frac{1}{\|w\|}} w\in \partial B(0,1)\cap \W$, independently on $\delta$. By arbitrariness of $\epsilon$ we infer that $d\varphi_0(u)\geq 0$ for every $u\in\partial B(0,1)\cap \W$, hence also for every $u\in\W$. This is possible only if $d\varphi_0\equiv 0$, because an intrinsic linear map from $\W$ to $\V$ is actually $H$-linear and, if $d\varphi_0(u)>0$ for some $u$, then $d\varphi_0(u^{-1})=-d\varphi_0(u)<0$.

\emph{Step 2}:
We show now that, if $\rho,\sigma:A\subset \W\to\V$ are intrinsically differentiable at 0 with $\rho(0)=\sigma(0)=0$, then $\rho^{-1}\sigma$ is also intrinsically differentiable at 0 and moreover $d(\rho^{-1}\sigma)_0=d\rho_0^{-1}\cdot d\sigma_0$.\\
It is easy to check that $d\rho_0^{-1}\cdot d\sigma_0$ is an intrinsic linear map, since in our setting intrinsic linear maps are exactly $H$-linear maps (see \cite[Proposition 3.23]{as2009}).
So we are left to prove that
\[\frac{\|(d\rho_0^{-1}(w)\cdot d\sigma_0(w))^{-1}\cdot (\rho^{-1}\sigma)_0(w)\|}{\|w\|}\to 0\quad \text{ as }w\to 0.\]
Since $\rho(0)=\sigma(0)=0$, then $(\rho^{-1}\sigma)_0=\rho^{-1}\sigma$. Hence, by commutativity of $\V$ and the triangular inequality,
\[\frac{\|(d\rho_0^{-1}(w)\cdot d\sigma_0(w))^{-1}\cdot (\rho^{-1}\sigma)_0(w)\|}{\|w\|}\leq \frac{\|d\rho_0(w)^{-1}\cdot \rho(w)\|}{\|w\|}+\frac{\|d\sigma_0(w)^{-1}\cdot \sigma(w)\|}{\|w\|}\]
The last two quantities tends to 0 as $w\to 0$ since $\rho,\sigma$ are intrinsically differentiable at 0 and we conclude.

\emph{Step 3}:
Let us now prove that if $\psi$ and $\eta$ are as in the statement of the Lemma then $d\psi_{\bar w} \equiv d\eta_{\bar w}$.
Consider the translated functions
\[\begin{split}\psi_{\bar w}(w)&=\psi(\bar w)^{-1}\psi(\bar w \psi(\bar w)w\psi(\bar w)^{-1}),\\\eta_{\bar w}(w)&=\eta(\bar w)^{-1}\eta(\bar w \eta(\bar w)w\eta(\bar w)^{-1}).\end{split}\]
Since $\psi\leq\eta$ and $\psi(\bar w)=\eta(\bar w)$, we get $\psi_{\bar w}\leq \eta_{\bar w}$ and clearly $\psi_{\bar w}(0)=\eta_{\bar w}(0)=0.$ Notice also that since $\psi$ and $\eta$ are intrinsically differentiable at $\bar w$ so are $\psi_{\bar w}$ and $\eta_{\bar w}$ at 0.
\linebreak Hence, by \emph{Step 2}, the map $\psi_{\bar w}^{-1}\eta_{\bar w}$ is intrinsically differentiable at 0 and $d(\psi_{\bar w}^{-1}\eta_{\bar w})_0=d(\psi_{\bar w})_0^{-1}\cdot d(\eta_{\bar w})_0$. Moreover $\psi_{\bar w}^{-1}\eta_{\bar w}\geq 0$. Thus, by \emph{Step 1}, $d(\psi_{\bar w}^{-1}\eta_{\bar w})_0\equiv 0$, which implies $d(\psi_{\bar w})_0\equiv d(\eta_{\bar w})_0$. Hence, by definition, $d\psi_{\bar w}=d\eta_{\bar w}$.

\emph{Step 4}: Let us finally prove the main conclusion. Let $\psi, \phi, \eta$ be as in the statement. As before we observe that $\psi_{\bar w}\leq \phi_{\bar w} \leq \eta_{\bar w}$. Now let $\theta \ceq d\psi_{\bar w}\equiv d\eta_{\bar w}$. Then for every $w \in \W$ near 0 we have
\[
\frac{\psi_{\bar w}(w)-d\psi_{\bar w}(w)}{d(0,w)}\leq \frac{\phi_{\bar w}(w)-\theta(w)}{d(0,w)}  \leq \frac{\eta_{\bar w}(w)-d\eta_{\bar w}(w)}{d(0,w)}.
\]
Then the left hand side and the right hand side go to $0$ when $w \in B(0,s) \cap \W$ for $s \to 0$. This concludes the proof.
\end{proof}
Now we can present the alternative proof for Stepanov Theorem for 1-codimensional intrinsic graphs, that we restate.
\begin{theorem}\label{codim1}
Let $\W,\V$ be complementary subgroups of $\H^n$ where $\V$ is a horizontal subgroup of dimension $1$. Let $A \se \W$ be an open set and $\phi:A \to \V$. Then $\phi$ is intrinsically differentiable almost everywhere on $S_\phi$.
\end{theorem}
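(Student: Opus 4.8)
The plan is to follow Mal\'y's argument \cite{maly1999}, substituting his two analytic ingredients with their intrinsic counterparts supplied above: Lemma \ref{infLip} (an infimum or supremum of intrinsic $L$-Lipschitz maps into $\V\equiv\R$ is again intrinsic Lipschitz, unless it degenerates to $\pm\infty$) and Lemma \ref{lem_carabinieri} (the intrinsic squeeze principle). The whole proof reduces to the following claim: it suffices to produce, for almost every $\bar w\in S_\phi$, two intrinsic Lipschitz maps $\psi,\eta:\W\to\V$ with $\psi\le\phi\le\eta$ on a neighborhood of $\bar w$, with $\psi(\bar w)=\phi(\bar w)=\eta(\bar w)$, and with $\psi,\eta$ both intrinsically differentiable at $\bar w$. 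Indeed, Lemma \ref{lem_carabinieri} then yields the intrinsic differentiability of $\phi$ at $\bar w$; note that the matching of the differentials $d\psi_{\bar w}\equiv d\eta_{\bar w}$ is \emph{automatic} (it is proved inside Lemma \ref{lem_carabinieri}), so it need not be arranged separately in the construction.

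First I would decompose $S_\phi=\bigcup_{j,i}C_{j,i}$ into countably many measurable pieces of diameter $<\tfrac1j$ on which $\phi$ is intrinsic $j$-Lipschitz, exactly as in \emph{Step 1} of the proof of Theorem \ref{maindiff}, retaining that the defining cone condition \eqref{defofCj} holds against the whole local graph $\gr_{\phi|_{B_\W(w,1/j)}}$ (not merely against $\gr_{\phi|_{C_{j,i}}}$). Since a countable union is involved, it is enough to treat one fixed piece $E:=C_{j,i}$ and to show that $\phi$ is intrinsically differentiable at almost every $\bar w\in E$. On $E$ I would build the sandwich as the intrinsic analogue of Mal\'y's infimal and supremal convolutions: for each $z\in E$ let $\eta_z^{+}$ (respectively $\eta_z^{-}$) be the intrinsic $j$-Lipschitz function whose graph is the upper (respectively lower) sheet of $\partial C_{1/j}(z\cdot\phi(z))$, normalised so that $\eta_z^{\pm}(z)=\phi(z)$; this is well defined thanks to the codimension-one identification $\V\equiv\R$. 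Setting $\eta:=\inf_{z\in E}\eta_z^{+}$ and $\psi:=\sup_{z\in E}\eta_z^{-}$, Lemma \ref{infLip} guarantees that $\psi,\eta$ are intrinsic $\widetilde L$-Lipschitz (neither degenerates, since both are pinched to $\phi$ at the points of $E$), hence intrinsically differentiable almost everywhere by the Rademacher Theorem \ref{teo_RadVit}. By construction $\psi\le\phi\le\eta$ holds with equality at every point of $E$, so at almost every $\bar w\in E$ — namely those at which both $\psi$ and $\eta$ are differentiable — the hypotheses of Lemma \ref{lem_carabinieri} are satisfied.

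The hard part, and the place where Mal\'y's scheme genuinely avoids any density theorem, will be upgrading the inequality $\psi\le\phi\le\eta$ from the piece $E$ to a \emph{full neighborhood} of $\bar w$, which is what Lemma \ref{lem_carabinieri} actually requires: off $E$ the map $\phi$ is a priori uncontrolled, and the cone condition \eqref{defofCj} only provides an \emph{exclusion} (the graph of $\phi$ on $B_\W(w,1/j)$ avoids the open cone) rather than a one-sided inequality. The resolution must exploit that \eqref{defofCj} holds against the full local graph, together with a careful bookkeeping of radii (taking each $E$ small relative to $1/j$) and the codimension-one order structure of $\V\equiv\R$: for $w$ close to $\bar w$, the value $\phi(w)$ avoids the narrow cone around $\bar w\cdot\phi(\bar w)$, and continuity of $\phi|_E$ is what selects the correct sheet and traps $\phi(w)$ on the right side of the envelopes $\psi,\eta$. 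Once this localisation is established, the verification that the pieces $E$ are measurable and that $\psi,\eta\not\equiv\pm\infty$ is routine, and a final countable union over $j,i$ completes the proof that $\phi$ is intrinsically differentiable almost everywhere on $S_\phi$.
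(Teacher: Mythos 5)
Your overall strategy is sound, but the step you single out as ``the hard part'' and leave unresolved is not actually hard, and your diagnosis of it is mistaken. In codimension one the complement of the full (two-sided) cone is \emph{exactly} the open region strictly between its two sheets, so the exclusion provided by \eqref{defofCj} already \emph{is} the two-sided inequality you need; no ``sheet selection'' and no continuity of $\phi|_E$ enters. Concretely, write $p=z\cdot\phi(z)$ and $q=w\cdot\phi(w)$. By Proposition \ref{prop_proj}, $(p^{-1}\cdot q)_\V=\phi(z)^{-1}\cdot\phi(w)$, while $(p^{-1}\cdot q)_\W$ depends only on $p$ and on $q_\W=w$, not on $\phi(w)$. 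Hence, for fixed $w$, the condition $q\notin C_{1/j}(p)$, i.e.\ $\|(p^{-1}\cdot q)_\V\|<j\,\|(p^{-1}\cdot q)_\W\|$, confines $\phi(w)$ (identified with a real number) to the open interval whose endpoints are precisely $\eta_z^{-}(w)$ and $\eta_z^{+}(w)$. Thus $z\in C_j$ gives $\eta_z^{-}\le\phi\le\eta_z^{+}$ on all of $B_\W(z,1/j)\cap A$, and with your own bookkeeping ($\diam E<1/j$, so $\epsilon:=1/j-\diam E>0$ and every $w\in B_\W(\bar w,\epsilon)\cap A$ lies in $B_\W(z,1/j)$ for every $z\in E$) the infimum and supremum over $z\in E$ yield $\psi\le\phi\le\eta$ on $B_\W(\bar w,\epsilon)\cap A$, with equality at $\bar w$. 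Lemma \ref{lem_carabinieri}, applied on this neighbourhood (intrinsic differentiability is a local property), then concludes at every $\bar w\in E$ at which $\psi,\eta$ are intrinsically differentiable, hence at almost every point of $E$ by Theorem \ref{teo_RadVit}. One cosmetic correction: the sheet functions of $C_{1/j}$ are intrinsic $M(j)$-Lipschitz for some $M(j)$ comparable to, but in general not equal to, $j$; what matters for Lemma \ref{infLip} is that all $\eta_z^{\pm}$, $z\in E$, are left translates of a single model sheet and therefore share one constant.

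With that step filled in, your proof is correct, but it is genuinely different from the paper's, which follows Mal\'y more literally and never decomposes $S_\phi$ at all. The paper enumerates the rational balls $U_j\subset A$ on which $\phi$ is bounded and defines $\eta_j,\psi_j$ as the infimum/supremum of \emph{all} competitors with $\Lip(\cdot,U_j)\le j$ lying above/below $\phi$ on $U_j$ (Lemma \ref{infLip} together with an extension theorem); the negligible set $N$ is fixed once and for all, and the cone at a point $\bar w\in S_\phi\setminus N$ enters only at the very end, its upper sheet being a single admissible competitor that pinches $\eta_i(\bar w)$ down to $\phi(\bar w)$ for a suitably chosen ball $U_i$ with $B(\bar w,r/2)\se U_i\se B(\bar w,r)$. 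So the paper needs no measurable decomposition of $S_\phi$ and no diameter bookkeeping, at the price of working with abstract competitor families and invoking the extension theorem; your Federer-style decomposition replaces those with completely explicit envelopes built from translated cones. Both routes avoid density theorems, which is the point of this section, but yours is a Federer--Mal\'y hybrid while the paper's is pure Mal\'y.
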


\begin{proof}
    
   Let $\{U_j\}_{j\in\N}$ be an enumeration of all rational balls contained in $A$ such that $\phi$ is bounded on $U_j$ (here we are identifying $\W\equiv \R^{2n})$. 
   Is it clear that $S_\phi \se \bigcup_{i \in \N}U_i$. For each $j \in \N$ we define two intrinsic Lipschitz functions $\eta_j$ and $\psi_j$ on $U_j$ by setting
\begin{align}\label{inf}
    \eta_j(w)\ceq &\inf \lbrace \eta(w): \eta \geq \phi \text{ on }B_j, \Lip(\eta,U_j)\leq j \rbrace,\\
     \psi_j(w)\ceq &\sup \lbrace \psi(w):  \psi \leq \phi \text{ on }B_j, \Lip(\psi,U_j)\leq j \rbrace.
\end{align}
By Lemma \ref{infLip} (combined with extension Theorem in \cite[Proposition~3.4]{VSNS2012} or \cite[Theorem~4.1]{fs2016}), for every $j \in \N$ there exists $\tilde j \geq j$ such that $\eta_j$ and $\psi_j$ are intrinsic $\tilde j$-Lipschitz on $U_j$. Define now
\[
N=\bigcup_{j \in \N} \left\lbrace w \in U_j: \eta_j \text{ or }\psi_j \text{ is not intrinsically differentiable at }w \right\rbrace.
\]
By Theorem \ref{teo_RadVit}, we have that $\leb^{2n}(N)=0$. Let $\bar w \in S_\phi \setminus N$: we will prove that $\phi$ is intrinsically differentiable at $\bar w$, concluding the proof. By definition of $S_\phi$, there exist $\beta>0$ and $r>0$ such that
\[C_\beta(\bar w\cdot\phi(\bar w))\cap \gr_{\phi|_{B_\W(\bar w,r)}}=\{\bar w\cdot\phi(\bar w)\}.\]
Since $\mathbb{V}$ has dimension $1$ we can write $\mathbb{V}=\{\mathrm{exp}(tV)\ |\ t\in\mathbb{R}\}.$ Moreover, the ``positive part'' of the cone $C^+_\beta(\bar w\cdot \phi(\bar w)):= C_\beta(\bar w\cdot \phi(\bar w))\cap \ \mathrm{exp}(\{Z\ |\ \langle Z,V\rangle \geq 0\})$ is the graph of an intrinsic $M$-Lipschitz function $\gamma:\W\to\V$ for some $M>0$ (see Lemma 4.20 in \cite{fssc2010} and also \cite{fs2016}).
Consider now  $i\geq M$ such that
\[
B(\bar w, r/2) \se U_i \se B(\bar w,r).
\]
Clearly $\phi(\bar w)\leq \eta_i(\bar w)$. On the other hand $\gamma$ is a suitable competitor in the family defined in \eqref{inf}: hence $\eta_i(\bar w)\leq \gamma(\bar w)=\phi(\bar w)$. The same argument works for $\psi_i$ and we deduce $\psi_i(\bar w)=\phi(\bar w)=\eta_i(\bar w)$. Hence we conclude using Lemma \ref{lem_carabinieri}.
\end{proof}
\begin{rem}
Similarly to Theorem \ref{stepanov1} and Theorem \ref{maindiff}, the proof of Theorem \ref{codim1} can be extended to a more general context. Specifically, the same approach can be applied to one-codimensional graphs within general Carnot groups that satisfy a Rademacher-type theorem. The equivalence between intrinsic linear and $H$-linear maps (from normal to abelian subgroups), utilized in Lemma \ref{lem_carabinieri}, can be derived as shown in \cite{as2009}, using, for example, \cite[Proposition 3.4]{dd2021}.
\end{rem}

\bibliographystyle{acm}
\bibliography{STEPANOVbib}

\end{document}